\documentclass[12pt]{amsart}
\usepackage[T1]{fontenc}
\usepackage{tikz-cd}
\usepackage{amsmath}
\usepackage{calligra}
\usepackage{amssymb}
\usepackage{amsthm}
\usepackage{xcolor}
\usepackage{appendix}
\usepackage[margin=1.2in]{geometry}
\usepackage{hyperref}
\usepackage{mathrsfs} 
\usepackage{yfonts}
\usepackage{egothic}
\usepackage{aurical}
\usepackage{enumitem}
\usepackage{pifont}

\newtheorem*{thm-non}{Theorem}
\newtheorem{thm}{Theorem}
\numberwithin{thm}{subsection}
\newtheorem{dfn}{Definition}
\numberwithin{dfn}{subsection}
\newtheorem{prop}{Proposition}
\numberwithin{prop}{subsection}
\newtheorem{cor}{Corollary}
\numberwithin{cor}{subsection}

\newtheorem{lem}{Lemma}
\numberwithin{lem}{subsection}
\newtheorem{rem}{Remark}

\newtheorem{D-prop}{Definition-Proposition}

\newcommand{\D}{\mathscr{D}}

\newcommand{\F}{\mathscr{F}}
\newcommand{\ft}{\frak{t}}

\newcommand{\fg}{\frak{g}}

\DeclareMathAlphabet{\mathcalligra}{T1}{calligra}{m}{n}

\newcommand{\C}{\mathbb{C}}
\newcommand{\R}{\mathbb{R}}
\newcommand{\hH}{\mathbb{H}}
\newcommand{\oO}{\mathbb{O}}
\newcommand{\K}{\mathbb{K}}

\newcommand{\cJ}{\mathcal{J}}

\newcommand{\cM}{\mathcal{M}}

\begin{document}

\title[A Proof of the Kontsevich--Soibelman Conjecture]
{The Koopman--von Neumann--Landau--Ginzburg theory and a Proof of the Kontsevich--Soibelman Conjecture}

\author{N. C. Combe}
\email{noemie.combe@devinci.fr}
\address{De Vinci Research Center, De Vinci Higher Education, Paris France}
\maketitle

\medskip

\begin{abstract}
We show that the Hilbert space of the Koopman–von Neumann formulation of Landau–Ginzburg theory is parametrised by a real Monge–Ampère domain, which carries a natural pre‑Frobenius (and, under additional conditions, a Frobenius) structure. Restricting to finite‑dimensional (dually flat) exponential families, the parameter space becomes a Monge–Ampère domain and a pre-Frobenius manifold. Our main theorem proves that for every Berglund–Hübsch–Krawitz mirror pair of Calabi–Yau orbifolds arising from an invertible polynomial, this Monge–Ampère domain (the open probability simplex) is the base of a Lagrangian torus fibration on both the original and the mirror hypersurface, with dual fibres in the sense of Strominger–Yau–Zaslow. The construction recovers the SYZ picture from the Landau–Ginzburg–Koopman–von Neumann framework. In particular, this proves the Kontsevich–Soibelman conjecture (2001) for all Berglund–Hübsch–Krawitz mirror pairs: the base of the SYZ fibration is a Monge–Ampère domain (the open simplex), and the torus fibrations on the mirror pair are dual. A toy model of cones of positive definite matrices illustrates the geometric structures. 
\end{abstract}

\medskip 

{{\bf Keywords:}  Affine differential geometry, Calabi--Yau manifolds, Monge--Ampère equation, Symmetric spaces, Frobenius manifolds}

\medskip 

{\bf 2020 Maths Subject Classification:} Primary: 53A15, 53C35, 35J96, 35Q99,14J33, 53D45 Secondary: 70GXX

\section{Introduction}

Kontsevich suggested that the Landau–Ginzburg (LG) model presents a good formalism for homological mirror symmetry~\cite{Kont98}. In this paper, we propose to consider the LG theory from the standpoint of the Koopman–von Neumann (KvN) construction \cite{St}. This allows us to obtain advances regarding the conjecture~\cite{KoS01} on a version of the Strominger–Yau–Zaslow (SYZ) mirror conjecture and to recover certain results on Lagrangian torus fibrations such as depicted in \cite{AAK}.

The Kontsevich–Soibelman (KS) conjecture asserts that in the limit, both mirror dual manifolds $X$ and $X^\vee$ become fiber bundles with toroidal fibers over the same base $\mathscr{Y}$, which is a Monge–Ampère manifold. This is based on a version of the SYZ conjecture~\cite{SYZ}.

Using LG theory/LG models \`a la Koopman–von Neumann \cite{St}, we show that there exists a Monge–Ampère domain $\mathscr{Y}$ parametrising a Hilbert space $\mathfrak{H}$ obtained from the LG–KvN theory. This construction produces a torus fibration $\pi:\mathfrak{H}\to\mathscr{Y}$. The mirror pairs are obtained via the Berglund–Hübsch–Krawitz (BHK) construction; the Monge–Ampère domain is given by a space of probability densities (an exponential family). We then prove (Theorem~\ref{Thm:SYZ}) that for every BHK mirror pair of Calabi–Yau orbifolds arising from an invertible polynomial, the same Monge–Ampère domain $\mathscr{Y}$ (the open probability simplex) is the base of a Lagrangian torus fibration on both the original and the mirror hypersurface, with dual fibres in the sense of Strominger–Yau–Zaslow. We thus establish the Kontsevich–Soibelman conjecture for all BHK mirror pairs.

The difference between the Koopman–von Neumann (KvN) \cite{St} version of LG theory and the LG model defined in \cite{AAK,AAEKO,Cecotti,CR} lies in the fact that the KvN–LG theory provides a Hilbert space, which corresponds to the space of states, and this Hilbert space generates a space of probability densities. The KvN–LG theory implies the LG model but the converse is not true. Hence our construction allows us to recover results on LG models related to Lagrangian torus fibrations. As a corollary of our result, we are able to recover results from \cite[Sec. 2–3]{AAK} in a completely different way.

{\it $\diamond$ By an abuse of language, when we mention the LG theory it means the LG theory from the Koopman–von Neumann viewpoint.}

We cite some related works \cite{AAK,AAEKO,KoS01,LW,Man98,SYZ,BH,K} where LG models, Monge–Ampère manifolds and torus fibrations are used in a homological mirror symmetry perspective.

In this paper, on the one hand, we prove that using LG theory and the Koopman–von Neumann approach, one can show that the weighted projective space corresponding to the Hilbert space of states is parametrised by a real Monge–Ampère manifold, forming a pre‑Frobenius manifold. As a result, we prove that for every BHK mirror pair, the same Monge–Ampère domain supports dual Lagrangian torus fibrations. All our results are proved using methods of affine geometry and toric mirror symmetry.

On the other hand, the LG theory provides a state space of an $n$-dimensional quantum system, represented as the set of all $n\times n$ positive semidefinite complex matrices of trace 1 (density matrices). We consider an enriched version of this object by allowing the space of all $n\times n$ positive definite matrices with coefficients in a division algebra (without restrictions on the trace) and show that this space is not only a potential pre‑Frobenius domain but also an elliptic Monge–Ampère domain. This domain contains a Frobenius manifold generated by an algebraic torus.

The considered model has many applications. For instance, by taking the real cone, we provide a Monge–Ampère domain that parametrises complex tori, forming the simplest example of Calabi–Yau manifolds. This is reminiscent of the Strominger–Yau–Zaslow construction in \cite[Sec. 8.3]{KoS01}. From our results, it follows that the complex cone provides a bridge from von Neumann algebras to Monge–Ampère manifolds and to Frobenius manifolds.

We mark a terminological difference between the LG {\it theory} and LG {\it model}. In this paper, LG theory refers to the original construction given by Landau and Ginzburg for superconductivity, expressed using the approach of Koopman–von Neumann. The LG model refers to developments in \cite{Vafa,Cecotti,CR,Chiodo,LW} and many others.

In this article, we adopt the definition of Frobenius manifolds given in \cite[p.19]{Man99}, where a Frobenius manifold is a potential pre‑Frobenius manifold satisfying the associativity condition. Such a framework inscribes itself as a continuation of the vision started by Yu. Manin and forms a continuation of the Hessian geometry school \cite{Shi84,SY,Kito,To04}.

We highlight that investigating relations between sources of Frobenius manifolds is part of the mirror problem \cite[p.3]{Man98}: 
``{\it Isomorphisms of Frobenius manifolds of different classes remain the most direct expression, although by no means the final one, of various mirror phenomena. From this vantage point, [...] one looks for isomorphisms between Frobenius manifolds (and their submanifolds) constructed by different methods.}''

Therefore, a statement showing that the LG theory/LG model forms a bridge between classes of Frobenius manifolds is fundamental.

\subsection{Main results and outline}

We demonstrate that there exists a real Monge–Ampère manifold $\mathscr{Y}$ parametrising a Hilbert space $\mathfrak{H}$ obtained from the Landau–Ginzburg theory and Koopman–von Neumann theory. This construction $\pi: \mathfrak{H}\to\mathscr{Y}$ generates weighted projective spaces and a torus fibration. From well-known works \cite{Vafa,Cecotti}, resulting in the Landau–Ginzburg/Calabi–Yau correspondence, one can construct Calabi–Yau manifolds/orbifolds in weighted projective spaces. Relying on this, we prove (Theorem~\ref{Thm:SYZ}) that for every Berglund–Hübsch–Krawitz mirror pair arising from an invertible polynomial, the same Monge–Ampère domain $\mathscr{Y}$ (the open probability simplex) is the base of a Lagrangian torus fibration on both Calabi–Yau manifolds, and the fibration is dual in the SYZ sense. This provides a proof of the Kontsevich–Soibelman conjecture for the class of Calabi–Yau orbifolds obtained from invertible polynomials via the Berglund–Hübsch–Krawitz construction. The base is identified with an open simplex, which is a Monge–Ampère domain, and the torus fibrations are dual in the SYZ sense.

The paper is organised as follows. Section~\ref{S:2} recalls the definition of pre‑Frobenius manifolds. Section~\ref{S:PDE} introduces Monge–Ampère domains and proves that they are pre‑Frobenius manifolds (Theorem~\ref{P:preF}). Section~\ref{S:LG1} presents the LG–KvN theory, restricts to finite‑dimensional exponential families, and shows that the resulting parameter space is a Monge–Ampère domain (Theorem~\ref{T:LG}). It also establishes a link to Frobenius manifolds via the Combe–Manin theorem (Corollary~\ref{T:END}) and proves the SYZ fibration theorem (Theorem~\ref{Thm:SYZ}), thus {\it establishing the Kontsevich–Soibelman conjecture} for all BHK mirror pairs.

Section~\ref{S:ToyModel1} studies a toy model: cones of positive definite matrices, which are Monge–
Ampère domains and contain algebraic tori as Frobenius submanifolds. The appendices provide background on symmetric cones and the LG model.

\section{Pre‑Frobenius manifolds and Monge-Amp\`ere domains}
\subsection{Pre‑Frobenius manifolds}\label{S:2}

We recall the definition of a potential pre‑Frobenius manifold from \cite[pp. 18–19]{Man99}. All manifolds are assumed to be $C^\infty$ or real analytic.

\begin{dfn}
A \textbf{potential pre‑Frobenius manifold} consists of the following data:
\begin{enumerate}[label=(\roman*)]
\item An affine flat structure on $M$, i.e. a flat torsion‑free connection $\nabla^0$.
\item A non‑degenerate symmetric bilinear form $g$ on the tangent sheaf $\mathcal{T}_M$ that is compatible with $\nabla^0$ in the sense that $\nabla^0 g$ is symmetric.
\item A symmetric rank‑three tensor $A$ such that in local flat coordinates $(x^a)$, $A_{abc}=\partial_a\partial_b\partial_c\Psi$, where $\partial_i=\frac{\partial}{\partial x^i}$ are flat local tangent vector fields and  $\Psi$ is some smooth potential function.
\item A bilinear multiplication $\circ:\mathcal{T}_M\times\mathcal{T}_M\to\mathcal{T}_M$ given by $\partial_a\circ\partial_b=\sum_c A_{ab}^c\partial_c$ with $A_{ab}^c=\sum_e A_{abe}g^{ec}$, where $g^{ec}$ are the components of the inverse metric, i.e.  $(g^{ec}):=(g_{ec})^{-1}$ in matrix notation.
\item The compatibility $A(X,Y,Z)=g(X\circ Y,Z)=g(X,Y\circ Z)$ for flat vector fields $X,Y,Z$. In local flat coordinates, the compatibility condition reads: \[
A_{abc} = g(\partial_a \circ \partial_b, \partial_c) = g(\partial_a, \partial_b \circ \partial_c)
\]
\end{enumerate}
A \textbf{Frobenius manifold} is a potential pre‑Frobenius manifold for which the multiplication $\circ$ is associative (i.e. the WDVV equations hold).
\end{dfn}

The associativity condition can be expressed as the flatness of a pencil of connections $\nabla^{(t)}=\nabla^0+t (X\circ Y)$, $t\in\mathbb{R}$ and $X,Y$ are vector fields \cite[Theorem 1.5]{Man99}. In the following we only need the pre‑Frobenius structure, which we will obtain from Monge–Ampère domains.

\subsection{Monge-Ampère manifolds are potential pre-Frobenius manifolds}\label{S:PDE}

In this section we show that any domain $\mathscr{D} \subset \mathbb{R}^n$ equipped with a smooth strictly convex solution $\Psi$ of an elliptic Monge‑Ampère equation (a GEMA) carries a natural pre‑Frobenius structure. This is a purely geometric observation: the data $(g_{ij}=\partial_i\partial_j\Psi, A_{ijk}=\partial_i\partial_j\partial_k\Psi, \nabla^0)$ satisfy the axioms of a potential pre‑Frobenius manifold. We also discuss the limitations of this construction for producing full Frobenius manifolds.

\subsection{Monge--Ampère domains and geometrization of the elliptic Monge--Ampère equation}

\subsubsection{Geometrization of an elliptic Monge--Ampère equation (GEMA)}
If $\D$ is a strictly convex bounded subset of $\mathbb{R}^n$ then for any nonnegative function $f$ on $\D$ and continuous $\tilde{g}:\partial \D \to \mathbb{R}^n$ there is a unique convex smooth function $\Psi\in C^{\infty}(\mathscr{D})$ such that 
\begin{equation}\label{E:EMA}
\det \mathrm{Hess}(\Psi)= f, 
\end{equation} in $\D$ and $\Psi=\tilde{g}$ on $\partial \D,$ (see \cite{RT} Eq. (1.1) and Eq. (1.2)). 

The {\it geometrization of an elliptic Monge--Ampère equation} refers to the geometric data generated by $(\mathscr{D}, \Psi)$, where $\mathscr{D}$ is a strictly convex domain and $\Psi$ a real convex smooth function (with arbitrary and smooth boundary values) such that Eq.~\eqref{E:EMA} is satisfied. For simplicity, we use the symbol $(\mathscr{D},\Psi)$ to refer to a GEMA.

\subsubsection{Affine structure on GEMA}
Since $\mathscr{D}$ is an open subset of $\mathbb{R}^n$, it inherits the standard flat affine structure. In particular, there exists a flat, torsion‑free affine connection $\nabla^0$ on $\mathscr{D}$ (the usual directional derivative). We denote this connection by $\nabla^0$ throughout.

\subsubsection{Hessian structure of GEMA}
\begin{lem}\label{L:rank2,3}
$(\mathscr{D},\Psi)$ is equipped with a Hessian metric $g$ and a rank three symmetric tensor $A$, defined in local coordinates by
\[
g_{ij}(x)=\frac{\partial^2\Psi}{\partial x^i \partial x^j},\qquad
A_{ijk}(x)=\frac{\partial^3\Psi}{\partial x^i \partial x^j\partial x^k},
\]
where $i,j,k=1,\dots,n$.
\end{lem}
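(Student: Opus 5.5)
The plan is to check directly that the two tensors in Lemma~\ref{L:rank2,3} are well defined, symmetric, and of the right type, and that $g$ is a genuine (positive definite) metric. The affine structure $\nabla^0$ on $\mathscr{D}$ makes this possible: its flat coordinates are the standard coordinates $x^1,\dots,x^n$ of $\mathbb{R}^n$, and any two such affine charts differ by an affine map $x\mapsto Lx+b$.

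\textbf{Step 1: tensoriality.} Since $\Psi\in C^\infty(\mathscr{D})$, the partial derivatives $\partial_i\partial_j\Psi$ and $\partial_i\partial_j\partial_k\Psi$ are smooth functions. Under an affine change of flat coordinates the Jacobian $L$ is constant, so no second derivatives of the coordinate change appear. Hence the $g_{ij}$ transform as a $(0,2)$-tensor and the $A_{ijk}$ as a $(0,3)$-tensor. Equivalently, and without reference to coordinates, we have
\[
g=\nabla^0 d\Psi, \qquad A=\nabla^0\nabla^0 d\Psi .
\]
These expressions make sense globally on $\mathscr{D}$.

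\textbf{Step 2: symmetry.} By Schwarz's theorem (equality of mixed partials for $C^3$ functions), $g_{ij}=g_{ji}$, and $A_{ijk}$ is invariant under every permutation of $(i,j,k)$. So $A$ is a totally symmetric rank-three tensor. Note also that $A_{ijk}=\partial_k g_{ij}$, that is, $A=\nabla^0 g$. This already gives that $\nabla^0 g$ is symmetric, which will be used later for axiom (ii) of a potential pre‑Frobenius manifold.

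\textbf{Step 3: non-degeneracy and positivity of $g$.} This is the main point, because convexity of $\Psi$ alone only gives $\mathrm{Hess}(\Psi)\ge 0$. I would use the GEMA equation \eqref{E:EMA}, $\det\mathrm{Hess}(\Psi)=f$.

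1. Assume, as in the setting of a GEMA with $\Psi$ strictly convex, that $f>0$ on $\mathscr{D}$.
2. A positive semidefinite matrix with nonzero determinant is positive definite.
3. Therefore $g_{ij}$ is a Riemannian metric on $\mathscr{D}$, and it is Hessian with respect to $\nabla^0$ by construction.

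If $f$ is allowed to vanish somewhere, positive definiteness holds only on the open set $\{f>0\}$. I would state this restriction explicitly, or else impose strict convexity ($\mathrm{Hess}\,\Psi>0$) as part of the definition.
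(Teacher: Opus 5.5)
Your proof is correct. On symmetry it coincides with the paper's argument, which uses the flat affine structure inherited from $\mathbb{R}^n$ together with equality of mixed partials. Your Step~1 ($g=\nabla^0 d\Psi$, $A=\nabla^0\nabla^0 d\Psi$, tensorial under affine changes of flat coordinates) only makes explicit what the paper takes for granted. It matters just for the manifold version, since a domain in $\mathbb{R}^n$ has a global flat chart.

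The genuine difference is the positivity of $g$. The paper asserts that convexity of $\Psi$ alone makes $g$ positive definite, and this is false. Convexity gives only $\mathrm{Hess}(\Psi)\geq 0$, and even strict convexity does not suffice. For example, $\Psi(x)=x_1^4+x_2^2+\dots+x_n^2$ on the unit ball is strictly convex and solves $\det\mathrm{Hess}(\Psi)=f$ with $f=12\cdot 2^{n-1}x_1^2\geq 0$, as the paper's GEMA definition permits. Yet $g$ degenerates along $\{x_1=0\}$.

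Your route is what actually proves the lemma: a positive semidefinite matrix with $\det\mathrm{Hess}(\Psi)=f>0$ has all eigenvalues positive. Its cost is the explicit hypothesis $f>0$ (equivalently, given convexity, $\mathrm{Hess}(\Psi)>0$). That is the honest content of ``elliptic'' here, since for $n\geq 2$ the Monge--Amp\`ere operator is elliptic at a convex $\Psi$ exactly when $\mathrm{Hess}(\Psi)>0$. The hypothesis also holds in the paper's later uses: $f=\exp(-C(\eta))$ in Theorem~\ref{T:LG}, and $f=c>0$ for the cones $\mathscr{P}_n(\K)$. You were right to flag it rather than lean on convexity.
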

\begin{proof}
Since $\mathscr{D}\subset\mathbb{R}^n$, it inherits the standard flat affine structure. The partial derivatives of $\Psi$ are symmetric because $\Psi$ is smooth, so $g$ and $A$ are symmetric tensors. Convexity of $\Psi$ implies $g$ is positive definite. Hence $g$ is a Hessian metric and $A$ is a symmetric rank‑three tensor.
\end{proof}

The Riemannian metric $g$ has a unique Levi‑Civita connection $\nabla$, with Christoffel symbols in flat coordinates given by $\Gamma_{ab}^c = \frac12 \sum_e g^{ce} A_{abe}$, where one denotes by $g^{ce}$ the components of the inverse metric, i.e. $(g^{ce}) = (g_{ce})^{-1}$ in matrix notation. The two connections are related by $\nabla_X Y = \nabla^0_X Y + \frac12 (X\circ Y)$, where $\circ$ will be defined below.

Whenever $\mathscr{D}$ is a manifold we call $(\mathscr{D},\Psi)$ a Monge–Ampère manifold, which coincides with the notion in \cite[Sec. 3.2]{KoS01}. We now show that such a manifold carries a pre‑Frobenius structure.

\subsection{Pre-Frobenius structures on GEMA}\label{S:GEMA}

\begin{prop}\label{P:preF}
Let $\mathscr{D}$ be a domain (resp. manifold). The quintuple $(\mathscr{D},\Psi,g,A,\nabla^0)$ is a potential pre‑Frobenius domain (resp. manifold).
\end{prop}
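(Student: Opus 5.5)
The plan is to verify the five axioms (i)–(v) of a potential pre‑Frobenius manifold one at a time. The data are those supplied by the GEMA $(\mathscr{D},\Psi)$ and Lemma~\ref{L:rank2,3}: the flat connection $\nabla^0$, the Hessian metric $g_{ij}=\partial_i\partial_j\Psi$, and the cubic tensor $A_{ijk}=\partial_i\partial_j\partial_k\Psi$. I will work throughout in the global affine coordinates $(x^1,\dots,x^n)$ of $\mathscr{D}\subset\mathbb{R}^n$. These are flat coordinates for $\nabla^0$, so every statement reduces to an identity about partial derivatives of $\Psi$.

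Axiom (i) holds because $\nabla^0$ is the standard directional derivative: its Christoffel symbols vanish, so it is flat and torsion‑free. For (ii), $g$ is symmetric by Lemma~\ref{L:rank2,3}. It is non‑degenerate because convexity together with $\det\mathrm{Hess}(\Psi)=f$ gives positive definiteness. Here I must assume $f>0$ on $\mathscr{D}$ (strict convexity), and I will state this restriction explicitly, since $f\ge 0$ alone could allow degeneracy. Compatibility holds because $(\nabla^0 g)_{kij}=\partial_k g_{ij}=\partial_k\partial_i\partial_j\Psi$, which is totally symmetric in $(k,i,j)$. Axiom (iii) holds by definition, with the potential taken to be $\Psi$ itself.

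For (iv), I define $\partial_a\circ\partial_b=\sum_c A^c_{ab}\partial_c$ with $A^c_{ab}=\sum_e A_{abe}g^{ec}$ and extend bilinearly over functions. This is well defined because $g^{ec}$ exists by non‑degeneracy. For (v), I compute
\[
g(\partial_a\circ\partial_b,\partial_c)=\sum_{d,e}A_{abe}g^{ed}g_{dc}=A_{abc}.
\]
By the same computation, $g(\partial_a,\partial_b\circ\partial_c)=A_{bca}$, and total symmetry of third partials gives $A_{bca}=A_{abc}$. The identities then extend to arbitrary flat vector fields, which are constant linear combinations of the $\partial_a$, by trilinearity.

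For the manifold case, I cover $\mathscr{D}$ by affine charts in which $\Psi$ is a local Hessian potential. Transition maps are affine, so $\partial_i$ transform by constant matrices, and $g$ and $A$ (hence $\circ$) are tensorial and glue. The main obstacle I expect is exactly this gluing: ensuring that the potential condition in (iii) is chart‑independent. Under affine changes of coordinates, third derivatives transform as tensors, and modifying $\Psi$ by an affine function does not change $g$ or $A$, so local potentials suffice. The positivity of $f$ is the only other genuine hypothesis to flag.
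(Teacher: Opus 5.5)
Your proposal is correct and follows the paper's proof essentially step by step: both check axioms (i)--(v) in the flat coordinates of $\mathscr{D}$, and the key identity in both is $g(\partial_a\circ\partial_b,\partial_c)=A_{abc}=A_{bca}=g(\partial_a,\partial_b\circ\partial_c)$. Your version is slightly cleaner in three respects, which the paper leaves implicit or gets slightly wrong: you define $\circ$ directly via $A^c_{ab}=\sum_e A_{abe}g^{ec}$ rather than through the Levi-Civita connection (the paper's claim that the resulting factor $\frac12$ can be absorbed by rescaling $\Psi$ does not work, since $A^c_{ab}$ is invariant under $\Psi\mapsto\lambda\Psi$); you state explicitly that $f>0$ is needed for $g$ to be non-degenerate; and you handle the affine-chart gluing in the manifold case.
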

\begin{proof}
\begin{enumerate}[label=\alph*)]
\item The domain $\mathscr{D}$ inherits the standard flat affine structure from $\mathbb{R}^n$, with associated flat torsion‑free connection $\nabla^0$. This satisfies condition (i).
\item By Lemma~\ref{L:rank2,3}, $g$ is a symmetric non‑degenerate bilinear form. In local coordinates $g_{ij}=\partial_i\partial_j\Psi$, so it is compatible with $\nabla^0$ (i.e. $\nabla^0 g$ is symmetric). This gives (ii).
\item The symmetric rank‑three tensor $A$ from Lemma~\ref{L:rank2,3} satisfies (iii).
\item Define the multiplication $\circ$ on the tangent sheaf by $X\circ Y:=\nabla_X Y$, where $\nabla$ is the Levi‑Civita connection of $g$. In flat coordinates, $\partial_a\circ\partial_b = \sum_c \Gamma_{ab}^c \partial_c = \frac12 \sum_c A_{ab}^c \partial_c$, where $A_{ab}^c = \sum_e A_{abe}g^{ec}$. The factor $1/2$ can be absorbed by rescaling $\Psi$, so we simply set $\partial_a\circ\partial_b = \sum_c A_{ab}^c\partial_c$. This gives (iv).
\item In local coordinates,
\[
g(\partial_a\circ\partial_b,\partial_c)=\sum_e A_{ab}^e g_{ec}=A_{abc}= \partial_a\partial_b\partial_c\Psi,
\]
and similarly $g(\partial_a,\partial_b\circ\partial_c)=A_{bca}=\partial_b\partial_c\partial_a\Psi$. By symmetry of $A$, these are equal, so $A(X,Y,Z)=g(X\circ Y,Z)=g(X,Y\circ Z)$ for flat vector fields. This is (v).
\end{enumerate}
Thus $(\mathscr{D},\Psi,g,A,\nabla^0)$ is a potential pre‑Frobenius domain.
\end{proof}

\begin{cor}
A Monge–Ampère manifold is a pre‑Frobenius manifold.
\end{cor}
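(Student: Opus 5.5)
This corollary follows from Proposition~\ref{P:preF} by passing from domains to manifolds. I first fix what a Monge–Ampère manifold means here, in the sense of \cite[Sec. 3.2]{KoS01}: a manifold $M$ with an atlas whose transition maps are affine, together with a locally defined smooth strictly convex potential $\Psi$ satisfying $\det \mathrm{Hess}(\Psi)=\mathrm{const}$ in the affine charts.

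\textbf{Gluing condition.} The potentials on overlapping charts are required to differ by affine functions. With this assumption, the tensors $g_{ij}=\partial_i\partial_j\Psi$ and $A_{ijk}=\partial_i\partial_j\partial_k\Psi$ are unchanged by adding an affine function to $\Psi$. Since the transition maps are affine, their Jacobians are constant, so these tensors transform tensorially. Hence $g$ and $A$ are globally defined tensors on $M$. The same reasoning shows that the local flat connections $\nabla^0$ glue to a global flat torsion-free connection, because affine changes of coordinates preserve the standard connection.

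\textbf{Local structure.} On each chart domain $U\subset\mathbb{R}^n$, which I shrink to a convex domain if needed, the pair $(U,\Psi|_U)$ is a GEMA. Proposition~\ref{P:preF} then makes $(U,\Psi,g,A,\nabla^0)$ a potential pre-Frobenius domain. The multiplication $\partial_a\circ\partial_b=\sum_c A_{ab}^c\partial_c$ is built from $g$ and $A$ by index raising, which is coordinate-free. So the local multiplications agree on overlaps and define a global product on $\mathcal{T}_M$.

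\textbf{Axioms.} Conditions (i)–(v) of the definition are pointwise or local statements in flat coordinates. Because they hold in every chart, they hold on $M$.

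\textbf{Main obstacle.} The one delicate point is the gluing condition. If a Monge–Ampère manifold is defined only by a metric that is locally Hessian, then the local potentials differ by affine functions automatically. This holds because two functions with equal Hessians on a connected open set differ by an affine function, so $g$ and $A$ glue. I would state this explicitly rather than assume a global potential $\Psi$. The resulting structure is a potential pre-Frobenius manifold in which the potential exists only locally, which is exactly what condition (iii) requires ("in local flat coordinates").
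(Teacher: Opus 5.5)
Your proposal is correct and follows the paper's route. The paper gives no separate argument for this corollary and treats it as immediate from Proposition~\ref{P:preF}, which is already stated for manifolds and proved entirely in local flat coordinates; your explicit gluing step (affine transition maps, local potentials differing by affine functions, and the product defined by coordinate-free index raising) makes precise what the paper leaves implicit when it writes a single global potential $\Psi$.
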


\subsection{Frobenius loci and the WDVV equation}
The pre‑Frobenius structure is a necessary prerequisite for a Frobenius manifold. A Frobenius manifold requires in addition that the multiplication $\circ$ be associative, i.e. the structure constants satisfy the WDVV equations. For a general GEMA this associativity is not automatic; it imposes strong constraints on $\Psi$ beyond the Monge–Ampère equation. Constructing Frobenius submanifolds inside a GEMA is a non‑trivial problem. The toy model in Section~\ref{S:ToyModel1} provides explicit examples where a Frobenius submanifold (an algebraic torus) exists inside a Monge–Ampère domain.

\begin{rem}
The classical relation between Monge‑Ampère equations and the WDVV equation is known in special cases; see e.g.~\cite{Dub96} for the construction of Frobenius structures from solutions of the WDVV equation, which can be seen as a Monge‑Ampère equation of a certain type. A systematic study of the interplay between the two is beyond the present work.
\end{rem}

\section{Landau--Ginzburg theory, exponential families and Frobenius structures}\label{S:LG1}

We examine the theory proposed by Landau and Ginzburg to explain superconductivity in the light of Frobenius manifolds. Appendix~\ref{S:LG} presents an introduction to LG theory. In \cite{Cecotti,LW,Chiodo}, LG models are applied in the context of Saito spaces and Calabi--Yau manifolds. Our goal is to show that the Hilbert space coming from the LG theory can be parametrised by a Monge–Ampère domain, which then carries a pre‑Frobenius (and, under additional conditions, a Frobenius) structure. This provides a new link between information geometry, LG theory and mirror symmetry.

\subsection{Landau–Ginzburg theory à la Koopman–von Neumann versus Landau–Ginzburg model}\label{S:psi}

An LG model is defined as a pair $(X,W)$, where $X$ is a non‑compact Kähler manifold and $W:X\to\mathbb{C}$ is a holomorphic Morse function, called the superpotential. Identifying $W$ with a quasi‑homogeneous polynomial, the zero locus $X_W=\{W=0\}$ defines a hypersurface in a weighted projective space. Under additional conditions, $X_W$ is Calabi–Yau.

The LG theory viewed from the Koopman–von Neumann (KvN) perspective \cite{Koo,vN} implies the LG model, but the converse is not true. An LG model captures a local aspect of the LG–KvN theory, whereas the latter is based on wave functions (order parameters) and a free energy, which are not explicitly present in the LG model.

In the KvN formulation, the wave function $\psi$ is an element of a Hilbert space $\mathfrak{H}$ of complex‑valued square‑integrable functions defined over a symplectic space (the phase space) $\mathcal{M}$. Coordinates on $\mathcal{M}$ are $(\boldsymbol{q}^1,\dots,\boldsymbol{q}^N,\boldsymbol{p}^1,\dots,\boldsymbol{p}^N)$, with $\boldsymbol{q}^i$ positions and $\boldsymbol{p}^i$ momenta. The square of the absolute value $\rho(\psi):=|\psi|^2=\psi\overline{\psi}$ is interpreted as a probability density. Normalisation $\int_{\mathcal{M}}|\psi|^2\,d\lambda =1$ makes $\rho$ a {\it probability density} and defines a probability measure: 
$$P(C)=\int_C|\psi|^2\,d\lambda,$$ where $C$ is any Borel set $C\in (\mathcal{M},\Sigma)$ with $\Sigma$ being the corresponding Borel $\sigma$-algebra on $\cM$ and the measure $\lambda$ is a Liouville measure.

Via the Koopman–von Neumann construction, the evolution of $\psi(x,t)$ is governed by the Liouville equation, and the associated unitary evolution operator guarantees conservation of total probability. The Hilbert space $\mathfrak{H}$ carries a natural torus action: $$\psi(x)\mapsto e^{i\theta}\psi(x)$$ gives the same density, so the physical state is a ray. This will be the fibre of a torus fibration.

\subsection{Restriction to a finite‑dimensional exponential family}

The space of all probability densities $|\psi|^2$ is infinite‑dimensional and cannot be a finite‑dimensional Monge–Ampère domain as defined in Section~\ref{S:PDE}. In the KvN formulation of Landau–Ginzburg theory it is natural to consider \textbf{finite‑dimensional exponential families} of wave functions. Physically, the order parameter $\psi$ describes the condensate of Cooper pairs (see Appendix \ref{S:LG}), and the free energy of the LG theory is a functional of $|\psi|^2$. Near a phase transition, the system is described by a finite number of order parameters – exactly a finite‑dimensional statistical manifold. The ground state (or thermal state) of the LG Hamiltonian, expressed in terms of these order parameters, yields an exponential family because the free energy is a convex function of the order parameters (Legendre duality). For mathematical generality, we take such a family as given.

Concretely, consider a family of wave functions $\{\psi_\theta\}_{\theta\in\Theta}$ parametrised by a finite set of real parameters $\theta=(\theta^1,\dots,\theta^m)\in\Theta\subset\mathbb{R}^m$, such that the corresponding densities take the exponential form
\[
\rho_\theta(x)=\exp\!\Bigl(C(x)+\sum_{i=1}^m\theta^i F_i(x)-\Psi(\theta)\Bigr),
\]
where:
\begin{itemize}
\item $\Psi(\theta)$ is the strictly convex log‑partition function; 
\item  $x $ lies in $(\mathcal{M}, \Sigma)$ the measurable space constituted from the phase space 
$\mathcal{M}$ which is equipped with its Borel $\sigma$-algebra;  
\item $F_i$ are the sufficient statistics---measurable maps defined as $F_i:(\mathcal{M},\Sigma)\to \mathbb{R}$.

\end{itemize}

The parameter space $\Theta$ equipped with the Fisher metric $g_{ij}=\partial_i\partial_j\Psi(\theta)$ is a Hessian manifold. Moreover, the Legendre dual coordinates give a convex potential (see e.g.~\cite{Amari}). The dual space satisfies by construction the elliptic Monge–Ampère equation also. Consequently, $\Theta$ is a \textbf{Monge–Ampère domain} in the sense of Section~\ref{S:PDE}.

We therefore take the base space $\mathscr{Y}$ of our fibration to be such a parameter space $\Theta$. The projection $\pi:\mathfrak{H}\to\mathscr{Y}$ maps each wave function $\psi_\theta$ (or any wave function whose density belongs to the exponential family) to its parameter $\theta$. The fibre over $\theta$ consists of all wave functions with density $\rho_\theta$; it is a torus coming from the phase ambiguity $\psi\mapsto e^{i\alpha}\psi$.

\subsection{Relation to Frobenius manifolds (Combe–Manin)}

Recently, Combe and Manin \cite{CoMa} proved that dually flat exponential families – the natural finite‑dimensional statistical manifolds arising in information geometry – carry an $F$- manifold structure (a weaker verison of Frobenius manifolds). In subsequent work \cite{CoMaMa22A,CoMaMa22B}, they showed that such manifolds are pre‑Frobenius. Our construction embeds this Frobenius structure directly into the Hilbert space of the LG–KvN theory: the parameters $\theta$ become the coordinates on the base space $\mathscr{Y}$, and the torus fibre corresponds to the phase ambiguity of the wave functions. This provides a new link between information geometry, LG theory and mirror symmetry.

\subsection{Duality}

\begin{thm}\label{T:LG}
Let $\mathfrak{H}$ be the Hilbert space of square‑integrable functions coming from the Landau–Ginzburg/Koopman–von Neumann theory. Restrict to a finite‑dimensional exponential family of wave functions with parameter space $\Theta$. Then $\Theta$ is a real Monge– Ampère domain (denoted $\mathscr{Y}$) and the map $\pi:\mathfrak{H}\to\mathscr{Y}$ sending a wave function to its parameter is a torus fibration.
\end{thm}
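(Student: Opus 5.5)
The proof splits into two independent parts: the Monge--Ampère structure on $\Theta$, and the fibration structure of $\pi$. Throughout, $\Theta$ denotes the natural parameter space of the exponential family, i.e. the interior of $\{\theta : \int_{\mathcal{M}} \exp(C+\sum\theta^iF_i)\,d\lambda<\infty\}$. We assume the family is minimal, meaning $1,F_1,\dots,F_m$ are affinely independent modulo null sets.

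\emph{Step 1: the Monge--Ampère structure.} By Hölder's inequality, $\Theta$ is convex and $\Psi(\theta)=\log\int_{\mathcal{M}}\exp\bigl(C+\sum_i\theta^iF_i\bigr)\,d\lambda$ is convex. Differentiation under the integral sign, justified by dominated convergence on compact subsets of the open set $\Theta$, shows that $\Psi$ is real analytic. Moreover,
\[
\partial_i\partial_j\Psi(\theta)=\mathrm{Cov}_{\rho_\theta}(F_i,F_j).
\]
Minimality makes this covariance matrix positive definite. Hence
\[
f:=\det\mathrm{Hess}(\Psi)
\]
is a smooth, strictly positive function on $\Theta$, and $\Psi$ solves the elliptic equation \eqref{E:EMA} with this right-hand side. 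The pair $(\Theta,\Psi)$ is therefore a GEMA in the sense of Section~\ref{S:PDE}. By Lemma~\ref{L:rank2,3} and Proposition~\ref{P:preF} it carries the Hessian metric $g$ (the Fisher metric) and a pre-Frobenius structure. The same argument applied to the Legendre dual potential $\Psi^*$ on the dual domain $\nabla\Psi(\Theta)$ gives the dual Monge--Ampère structure, since $\mathrm{Hess}\,\Psi^*=(\mathrm{Hess}\,\Psi)^{-1}$.

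\emph{Step 2: the fibration.} First define the total space precisely. Set
\[
\mathfrak{H}_\Theta:=\{\psi\in\mathfrak{H}:\ |\psi|^2=\rho_\theta \text{ for some }\theta\in\Theta\},
\]
and let $\pi$ be $\pi(\psi)=\theta$. This is well defined because minimality makes $\theta\mapsto\rho_\theta$ injective. To see that $\pi$ is continuous, use the $L^1$ estimate $\|\rho_\theta-\rho_{\theta'}\|_{L^1}\le\|\psi-\psi'\|_{L^2}\,\|\psi+\psi'\|_{L^2}$. Then note that moment maps $\theta\mapsto\mathbb{E}_\theta F=\nabla\Psi(\theta)$ are diffeomorphisms. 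The fibre $\pi^{-1}(\theta)$ consists of all $\psi=e^{i\alpha(x)}\sqrt{\rho_\theta}$. The constant-phase subgroup $\{e^{i\alpha}\}\cong S^1$ acts freely on it, and I would take the torus fibre to be this orbit, or more generally the orbit of a fixed finite-rank torus $T^k$ of phases $e^{i\sum_j\alpha_j\chi_j(x)}$ attached to the coordinates of $\mathcal{M}$.

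Local triviality then follows from the explicit section $s(\theta)=\sqrt{\rho_\theta}$, which is continuous into $L^2$. The map $\Theta\times T^k\to\mathfrak{H}$, $(\theta,t)\mapsto t\cdot s(\theta)$, is a homeomorphism onto its image, with inverse $\psi\mapsto\bigl(\pi(\psi),\,\psi/s(\pi(\psi))\bigr)$. This exhibits a trivial, hence locally trivial, torus bundle over $\mathscr{Y}=\Theta$.

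\emph{Main obstacle.} Step 2 is the hard part. The full fibre of densities $\{\psi : |\psi|^2=\rho_\theta\}$ is the infinite-dimensional group of measurable phases, not a finite torus. So the statement only holds once the total space is cut down to a finite-dimensional torus orbit, either constant global phases or a chosen $T^k$ of phase functions. I would make this restriction explicit and verify that the chosen torus acts freely. If freeness fails on a null set, I would remove the points where $\rho_\theta$ vanishes, which does not happen for exponential families since $\rho_\theta>0$ everywhere.
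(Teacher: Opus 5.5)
Your proposal is correct, and on the fibre it is more careful than the paper's own proof. The decomposition is the same as the paper's: a parameter map to $\Theta$, a phase fibre, and a Monge--Amp\`ere structure from a convex potential. Both halves are carried out differently, though.

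\textbf{Monge--Amp\`ere part.} The paper passes to the Legendre dual $\Phi(\eta)$ and asserts $\det\mathrm{Hess}\,\Phi=\exp(-C(\eta))$, citing Amari. It then draws the conclusion for ``$(\Theta,\Phi)$'', although $\Phi$ lives on the dual domain $\nabla\Psi(\Theta)$, not on $\Theta$. You instead work with $\Psi$ on $\Theta$ itself. You obtain $\det\mathrm{Hess}\,\Psi=\det\mathrm{Cov}_{\rho_\theta}(F)>0$ from minimality, and you get the dual statement from $\mathrm{Hess}\,\Psi^*=(\mathrm{Hess}\,\Psi)^{-1}$. This puts the potential on the right domain. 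It also makes explicit that, under the paper's convention allowing an arbitrary positive right-hand side $f$, the only real content is positive definiteness of $\mathrm{Hess}\,\Psi$.

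\textbf{Fibre.} The paper asserts that every $\psi$ with $|\psi|^2=\rho_\theta$ differs from $\psi_\theta$ by a constant phase, so that $\pi^{-1}(\theta)\cong U(1)$. As you note, that converse is false. The phase can be any measurable $e^{i\alpha(x)}$, so the set of all such $\psi$ is a copy of the infinite-dimensional group of measurable phase functions, not a torus. Restricting the total space to the orbit $\{e^{i\alpha}s(\theta)\}$ of the section $s(\theta)=\sqrt{\rho_\theta}$ is exactly the reading of ``an exponential family of wave functions'' under which the theorem holds. Your trivialisation $\Theta\times S^1\to\mathfrak{H}$ then supplies the continuity and local triviality that the paper never checks.

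Three smaller points.

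First, continuity of $\pi$ does not follow just from $\nabla\Psi$ being a diffeomorphism. You need $\rho_{\theta_n}\to\rho_\theta$ in $L^1$ to force $\theta_n\to\theta$, and $\rho\mapsto\mathbb{E}_\rho F$ is not a priori $L^1$-continuous when $F$ is unbounded. Argue instead along an a.e.\ convergent subsequence, where $\langle\theta_n-\theta,F(x)\rangle-\bigl(\Psi(\theta_n)-\Psi(\theta)\bigr)\to 0$ a.e. Minimality then excludes both a nonzero limit of $\theta_n-\theta$ and, after normalising, $|\theta_n-\theta|\to\infty$. With $\pi$ continuous, the fibre coordinate $\psi\mapsto\langle\psi,s(\pi(\psi))\rangle$ is continuous too.

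Second, the paper's definition of a GEMA asks for a strictly convex domain. Neither you nor the paper verifies this, and it can fail: for the Gaussian family, $\Theta$ is a half-plane. So both arguments really establish the statement for convex $\Theta$.

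Third, the optional $T^k$ variant is not needed. It would in any case require integer-valued $\chi_j$ for the phases to define a torus action.
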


\begin{proof}
We split the argument into three parts.

\begin{enumerate}
\item[(1)] \textbf{Base space.} For each $\psi\in\mathfrak{H}$ belonging to the chosen exponential family, the density $\rho=|\psi|^2$ is of the exponential form with parameter $\theta$. The space of such parameters is $\Theta$, which we take as $\mathscr{Y}$. The map $\pi$ is defined by $\pi(\psi)=\theta$, where $\theta$ is the unique parameter such that $|\psi|^2=\rho_\theta$.

\item[(2)] \textbf{Torus fibre.} Two wave functions $\psi$ and $e^{i\alpha}\psi$ produce the same density and hence the same parameter $\theta$. Conversely, any wave function with density $\rho_\theta$ differs from a fixed $\psi_\theta$ by a phase factor $e^{i\alpha}$ (uniquely up to an overall constant). Thus the fibre $\pi^{-1}(\theta)$ is homeomorphic to a torus $\mathbb{T}=U(1)$. Hence $\pi:\mathfrak{H}\to\mathscr{Y}$ is a torus fibration.

\item[(3)] \textbf{Monge–Ampère structure on $\mathscr{Y}$.} By construction, $\mathscr{Y}=\Theta$ is the parameter space of an exponential family. The log‑partition function $\Psi(\theta)$ is strictly convex and smooth. Its Hessian $g_{ij}=\partial_i\partial_j\Psi(\theta)$ defines a Riemannian metric (the Fisher metric) on $\Theta$. By the fundamental theorem of exponential families (see e.g.~Amari, \cite{Amari}, Theorem 1.2), the Legendre dual $\Phi(\eta)$ of $\Psi(\theta)$ satisfies
\[
\det\!\left(\frac{\partial^2 \Phi}{\partial\eta_i\partial\eta_j}\right) = \exp\!\bigl(-C(\eta)\bigr)
\]
for some smooth function $C(\eta)$. Thus $(\Theta,\Phi)$ satisfies the elliptic Monge–Ampère equation with $f = \exp(-C(\eta)) > 0$, so $\Theta$ is a GEMA. Therefore $\Theta$ is a Monge–Ampère domain in the sense of Section~\ref{S:PDE}.
\end{enumerate}
This completes the proof.
\end{proof}

\begin{cor}\label{T:END}
The base space $\mathscr{Y}$ of the torus fibration $(\mathfrak{H},\mathscr{Y},\pi)$ is a real (potential) pre‑Frobenius domain. Moreover, $\mathscr{Y}$ is actually a Frobenius manifold when equipped with the Fisher metric and the natural multiplication induced by Legendre duality.
\end{cor}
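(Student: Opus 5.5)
The plan is to treat the two assertions of Corollary~\ref{T:END} separately. The first is a direct consequence of results already in hand. The second must be deduced from the exponential-family structure of $\mathscr{Y}=\Theta$ through the Combe--Manin results.

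\textbf{Pre-Frobenius part.} By Theorem~\ref{T:LG}, $\mathscr{Y}=\Theta$ is a Monge--Ampère domain, i.e.\ a GEMA $(\Theta,\Phi)$ in the dual coordinates $\eta$. Equivalently, one can work with $(\Theta,\Psi)$ in the flat coordinates $\theta$, since $\Psi$ is smooth and strictly convex. Proposition~\ref{P:preF} applies verbatim. The affine structure $\nabla^0$ is the one given by the exponential coordinates $\theta$, the metric is $g_{ij}=\partial_i\partial_j\Psi$ (the Fisher metric), and the symmetric tensor is $A_{ijk}=\partial_i\partial_j\partial_k\Psi$. These satisfy axioms (i)--(v). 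So $\mathscr{Y}$ is a potential pre-Frobenius domain, and this part is routine.

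\textbf{Frobenius part.} What remains is associativity of $\partial_a\circ\partial_b=\sum_c A_{ab}^c\partial_c$, i.e.\ the WDVV equations
\[
\sum_{e,f}A_{abe}\,g^{ef}A_{fcd}=\sum_{e,f}A_{bce}\,g^{ef}A_{fad}.
\]
I would argue as follows.
\begin{enumerate}
\item Pass to the mixed coordinates $(\theta,\eta)$ supplied by Legendre duality, with $\eta_i=\partial_i\Psi$. In these coordinates $g^{ef}=\partial^e\partial^f\Phi$, and the multiplication becomes $\partial_a\circ\partial_b=\sum_c(\partial_a\partial_b\eta_c)\,\partial^c$.
\item Invoke \cite{CoMa}: the dually flat exponential family carries an $F$-manifold structure, and in particular the multiplication induced from $A$ is associative and commutative.
\item Invoke \cite{CoMaMa22A,CoMaMa22B}: the same data form a pre-Frobenius manifold, with invariance $g(X\circ Y,Z)=g(X,Y\circ Z)$ and a potential $\Psi$.
\item Combine steps 2 and 3: an associative potential pre-Frobenius manifold is, by definition, Frobenius in the sense of \cite{Man99}.
\end{enumerate}
Equivalently, one can check flatness of the pencil $\nabla^{(t)}=\nabla^0+t\,\circ$ \cite[Theorem 1.5]{Man99}. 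Its curvature expands as $t\,(\text{Codazzi term})+t^2\,(\text{commutator of multiplication operators})$. The $t$-linear term vanishes because $A$ is a third derivative, so only the $t^2$ term remains.

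\textbf{Main obstacle.} The hard step is this $t^2$ term: the vanishing of the commutators of the multiplication operators $A_a=(A_{ab}^c)$. For a general strictly convex $\Psi$ it fails, as the paper itself notes in the subsection on Frobenius loci. So the argument must use the specific form of $\Psi$ as a log-partition function, through the exact Combe--Manin hypotheses.

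My first attempt is to check the commutation directly in the $\eta$-coordinates. There the third derivatives of $\Phi$ and $\Psi$ are related by $A^{\eta}=-g^{-1}g^{-1}g^{-1}A^{\theta}$. If this computation does not yield commutativity in general, I would restrict the Frobenius claim to the exponential families covered by \cite{CoMa}. Those families, including the discrete and simplex families used for Theorem~\ref{Thm:SYZ}, are the ones relevant to $\mathscr{Y}$.
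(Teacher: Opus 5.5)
Your pre-Frobenius half coincides with the paper's argument (Theorem~\ref{T:LG} together with Proposition~\ref{P:preF}) and is fine. Your Frobenius half also follows the paper, whose proof likewise just cites \cite{CoMa}. There is, however, a genuine gap at your step~2. An $F$-manifold structure asserts associativity of \emph{some} multiplication. Nothing in your argument identifies that multiplication with the pre-Frobenius product $\partial_a\circ\partial_b=\sum_c A^c_{ab}\partial_c$, $A^c_{ab}=\sum_e A_{abe}g^{ec}$, built from the Fisher metric, so step~4 has nothing to combine.

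In fact this product is not associative in general. Take the family $\Psi(\theta)=\log\sum_{i=0}^N e^{\theta^i}$ used in Theorem~\ref{Thm:SYZ}, with $N\ge 2$. At a point $p$ of the open simplex, write tangent vectors in mixture coordinates, so $\sum_i X_i=0$. Then $g(X,Y)=\sum_i X_iY_i/p_i$ and $A(X,Y,Z)=\sum_i X_iY_iZ_i/p_i^2$; the latter is the third cumulant, i.e.\ $\partial^3\Psi$ rewritten in these coordinates. Hence
\[
(X\circ Y)_i=\frac{X_iY_i}{p_i}-g(X,Y)\,p_i,\qquad (X\circ Y)\circ Z-X\circ(Y\circ Z)=g(Y,Z)\,X-g(X,Y)\,Z .
\]
For $X=Y$ and any nonzero $Z$ with $g(X,Z)=0$, the associator equals $-g(X,X)\,Z\neq 0$. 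So the direct check you propose in $\eta$-coordinates will fail. Your fallback of restricting to ``the discrete and simplex families'' is backwards: they are precisely the counterexamples, and this is the $\mathscr{Y}$ that matters for Theorem~\ref{Thm:SYZ}. (In Dubrovin's definition the metric must moreover be flat, whereas the Fisher metric on this simplex has constant curvature $1/4$.)

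What can actually be proved is the pre-Frobenius statement for every exponential family. Associativity holds only under extra hypotheses, e.g.\ $\dim\Theta=1$, or product families $\Psi(\theta)=\sum_i\psi_i(\theta^i)$, for which $\partial_a\circ\partial_b=\delta_{ab}(\psi_a'''/\psi_a'')\,\partial_a$.

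Separately, your pencil expansion is inaccurate. Since $g=\mathrm{Hess}\,\Psi$ is not constant, $\partial_dA^c_{ab}$ is not symmetric in $a,d$, and a direct computation gives
\[
R^{(t)}(\partial_a,\partial_d)\partial_b=(t^2-t)\bigl(\partial_a\circ(\partial_d\circ\partial_b)-\partial_d\circ(\partial_a\circ\partial_b)\bigr).
\]
So the linear term does not vanish on its own; it cancels the quadratic one at $t=1$, where $\nabla^0+\circ$ is the dual flat connection. The equivalence ``flat pencil $\Leftrightarrow$ associativity'' still holds. That is exactly why the counterexample above is decisive against the second assertion of the corollary.
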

\begin{proof}
By Theorem~\ref{T:LG}, $\mathscr{Y}$ is a Monge–Ampère domain. By Theorem~\ref{P:preF} (Section~\ref{S:PDE}), every Monge–Ampère domain is a potential pre‑Frobenius domain. Exponential families are dually flat with respect to the Fisher metric (see e.g.~\cite{Amari}); therefore, by the Combe–Manin theorem \cite{CoMa}, $\mathscr{Y}$ admits a Frobenius manifold structure.
\end{proof}
\subsection{Outlook: mirror symmetry and torus fibrations}

Using the results of \cite{Vafa,Chiodo,CR} we now return to the LG model. According to \cite[Sec.6]{Vafa}, one can characterise a large class of Calabi–Yau manifolds using the fixed points of the Landau–Ginzburg theory under renormalisation group flows. The superpotential of the LG model gives the defining equation of a Calabi–Yau hypersurface in a weighted projective space.

The Hilbert space $\mathfrak{H}$ and the base $\mathscr{Y}$ constructed above naturally carry a torus action, which leads to a weighted projective space structure. The Berglund–Hübsch–Krawitz (BHK) construction \cite{BH,K} produces mirror pairs of Calabi–Yau orbifolds from invertible polynomials. Let $W$ be an invertible, non‑degenerate quasi‑homogeneous polynomial in $n$ variables with weights $w^i$ and degree $d$ such that $\sum_i w^i = d$ (the Calabi–Yau condition). Then
\[
X_W := \bigl( \{W=0\} \subset \mathbb{C}^{n+1}\setminus\{0\} \bigr)/\mathbb{C}^\times
\]
is a Calabi–Yau orbifold lying in a weighted projective space. By the BHK rule, the mirror dual $X_{W'}$ is obtained by transposing the exponent matrix of $W$. Theorem 4 of \cite{CR} states that $X_W/\mathbf{G}$ and $X_{W'}/\mathbf{G}'$ form a mirror pair, where $\mathbf{G},\mathbf{G}'$ are certain groups of diagonal symmetries.

The following theorem establishes that the LG–KvN construction naturally produces a SYZ torus fibration for every such mirror pair. In particular, this proves the Kontsevich–Soibelman conjecture (2001) for all Berglund–Hübsch–Krawitz mirror pairs.

\begin{thm}\label{Thm:SYZ}
Let \(W\) be an invertible polynomial in \(N+1\) variables satisfying the Calabi–Yau condition \(\sum_i w^i = d\), where \(w^i\) are the weights and \(d\) the degree. Let \(X_W/\mathbf{G}\) and \(X_{W'}/\mathbf{G}'\) be the associated BHK mirror pair. Then there exists a finite‑dimensional exponential family of wave functions in the LG–KvN Hilbert space such that the corresponding Monge–Ampère domain \(\mathscr{Y}\) (from Theorem~\ref{T:LG}) is the base of a Lagrangian torus fibration on both Calabi–Yau manifolds. Moreover, the same \(\mathscr{Y}\) parametrises the mirror pair, and the fibration is dual in the sense of Strominger–Yau–Zaslow.
\end{thm}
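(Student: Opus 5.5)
The plan is to realise $\mathscr{Y}$ concretely as the open probability simplex and to produce both torus fibrations from the moment maps of the torus actions on the two weighted projective ambient spaces. For the exponential family I take the categorical family on $N+1$ outcomes, which is the natural one attached to the $N+1$ variables of $W$. Its densities are $\rho_\theta(i)=\exp(\theta^i-\Psi(\theta))$ with $\theta^0=0$ and $\Psi(\theta)=\log\bigl(1+\sum_{i\ge1}e^{\theta^i}\bigr)$. Its mean parameters $\eta_i=\rho_\theta(i)$ range over the open simplex
\[
\Delta^\circ_N=\Bigl\{\eta\in\mathbb{R}_{>0}^{N+1}:\textstyle\sum_i\eta_i=1\Bigr\}.
\]
Theorem~\ref{T:LG} then makes $\mathscr{Y}=\Delta_N^\circ$ a Monge–Ampère domain, with dual potential the negative entropy $\Phi(\eta)=\sum_i\eta_i\log\eta_i$. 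For this $\Phi$ one has $\det\mathrm{Hess}\,\Phi=\bigl(\prod_{i=0}^N\eta_i\bigr)^{-1}$ in affine coordinates, and I will verify this directly.

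Next I use the torus. The big torus $T=(\mathbb{C}^\times)^{N+1}/\mathbb{C}^\times$ acts on $\mathbb{P}(w^0,\dots,w^N)$, and its compact part $(S^1)^{N}$ has moment map $\mu([z])=\bigl(w^i|z_i|^2/\sum_j w^j|z_j|^2\bigr)_i$ for the Fubini–Study-type Kähler form. After normalising by the weights, the image of $\mu$ is a simplex, and its interior is identified affinely with $\Delta_N^\circ$. I then argue that the wave function $\psi_\theta(i)=\sqrt{\rho_\theta(i)}\,e^{i\alpha_i}$ is exactly the point $z_i=\sqrt{\eta_i/w^i}\,e^{i\alpha_i}$. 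The phases $\alpha$ modulo the diagonal are the $N$-torus fibre of $\pi$ over $\theta$, which extends the phase ambiguity in Theorem~\ref{T:LG} from one global phase to componentwise phases.

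To restrict to $X_W$, I follow the approach of \cite{AAK}. Choose the symplectic form on the ambient space and degenerate $W$ to the large complex structure limit $W_t=W-t\prod_i z_i$. On the open torus part, $X_W$ becomes, as $t\to\infty$, close to a fibration over the tropical hypersurface inside $\mathscr{Y}$. The fibres there are Lagrangian $T^{N-1}$, obtained by restricting the moment map to $X_{W_t}\cap(\mathbb{C}^\times)^{N+1}$; this is Gross–Siebert/SYZ in the Abouzaid–Auroux–Katzarkov form. For $X_{W'}$, the BHK transpose $A^T$ exchanges the lattice of the torus $\mathbf{G}$ with the character lattice of $\mathbf{G}'$, as in Theorem 4 of \cite{CR}. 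I will show that the same simplex $\mathscr{Y}$, reinterpreted through the Legendre transform $\theta\leftrightarrow\eta$, serves as the base, and that the fibre over each point is the dual torus $H^1(T_b;\mathbb{R})/H^1(T_b;\mathbb{Z})$ after quotienting by $\mathbf{G}$ and $\mathbf{G}'$. Finite groups act on the lattices $M$ and $N$ by transposed matrices, so the fibres $M_\mathbb{R}/M$ and $N_\mathbb{R}/N$ become dual. The Legendre duality of $\Psi$ and $\Phi$ supplies the exchange of affine structures, which is Kontsevich–Soibelman's criterion for dual fibrations.

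I expect the main obstacle to be the dimension and base mismatch. The exponential family gives an $N$-dimensional base, whereas a Lagrangian fibration of the $(N-1)$-dimensional Calabi–Yau needs an $(N-1)$-dimensional base. Moreover, the genuine SYZ fibration is singular along the discriminant locus and not only over the boundary. My first attempt will be to take as exponential family the $(N-1)$-dimensional subfamily cut out by the tropical limit of $W$ (a facet-type slice of the simplex). I will accept the statement in the "limit" sense of Kontsevich–Soibelman, that is, Gromov–Hausdorff collapse to $\mathscr{Y}$ with fibrations defined over the complement of a codimension-two singular set. Proving that the Lagrangian fibres on $X_W$ and $X_{W'}$ are honestly dual, and not merely dual as abstract lattices, is where I anticipate needing real input from \cite{AAK} rather than from the LG–KvN formalism.
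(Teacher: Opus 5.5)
Your proposal does not prove the statement. The step that carries all the content is never constructed: a Lagrangian torus fibration of $X_W$ (and of $X_{W'}$) over $\mathscr{Y}=\Delta_N^\circ$, whose two affine structures are the $\eta$- and $\theta$-structures of $(\Psi,\Phi)$. You end by accepting it ``in the limit sense''.

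The mechanisms you sketch do not produce it either.
\begin{itemize}
\item Restricting the ambient moment map to $X_{W_t}\cap(\mathbb{C}^\times)^{N+1}$ cannot give Lagrangian $T^{N-1}$ fibres. For $N\ge 2$ the image of a hypersurface under $\mu$ is a diffeomorphic copy of its amoeba, which is $N$-dimensional. So the generic fibres of $\mu|_{X_{W_t}}$ have real dimension $2(N-1)-N=N-2$ (finite sets of points when $N=2$).
\item The extra circle in the tropical picture does not come from the ambient torus. Abouzaid--Auroux--Katzarkov build their fibration on the blow-up of $V\times\mathbb{C}$ along $H\times 0$, not on $H$ itself.
\item The base this route leads to is not the open simplex. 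As $t\to\infty$, $X_{W_t}$ degenerates to $\{z_0\cdots z_N=0\}$, whose moment image is $\partial\Delta_N\cong S^{N-1}$. That is exactly the part that you and the paper discard.
\item For the quintic, the known fibrations (Gross, Ruan) have base $S^3$ with a discriminant graph, around which the integral affine structure has nontrivial monodromy. This is incompatible with the simply connected, singularity-free domain $(\Delta_N^\circ,\Phi)$.
\item Nothing in the sketch identifies the symplectic affine structure of a fibration of $X_W$ with the $\eta$-coordinates, or the complex affine structure on the mirror side with the $\theta$-coordinates. That identification is what the Kontsevich--Soibelman criterion requires. The argument via transposed exponent matrices and $\mathbf{G},\mathbf{G}'$ only yields dual abstract lattices, as you acknowledge.
\item The compact $X_W$ cannot be mapped continuously onto the non-compact $\Delta_N^\circ$ at all.
\end{itemize}

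For comparison, the paper closes this step in one line. It asserts that $\boldsymbol{m}|_{X_W}$ is a Lagrangian $T^{N-1}$-fibration over the affine slice $\Delta_W=\{\eta\in\Delta_N:\sum_i w^i\eta_i=1/d\}$, and places the exponential family on $\Delta_W$. It handles the mirror by asserting, via toric mirror symmetry, that the Legendre coordinates $\theta^i=\log\eta_i$ are moment-map coordinates on $X_{W'}$. Your ``facet-type slice'' plays the role of this $\Delta_W$, but adopting it would not fill the gap. On $\Delta_N$ one has $\sum_i w^i\eta_i\ge\min_i w^i\ge 1>1/d$, so $\Delta_W$ is empty. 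The dimension count above also applies verbatim to $\boldsymbol{m}|_{X_W}$, and the paper's mirror-side identification is the same unproved claim as yours.

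On the positive side, your categorical family is the consistent choice for the pair $\Psi=\log\sum_i e^{\theta^i}$, $\Phi=\sum_i\eta_i\log\eta_i$. The paper writes a Lebesgue-measure family on $\Delta_W$ but then uses these categorical potentials. Your formula $\det\mathrm{Hess}\,\Phi=(\prod_i\eta_i)^{-1}$ is also correct.

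So your diagnosis of the obstacle is accurate, but neither your sketch nor the paper's argument supplies what is missing. That would be an actual Lagrangian fibration theorem for $X_W$ and $X_{W'}$, together with an identification of its base and affine structures with $(\Delta_N^\circ,\Psi,\Phi)$. The bases of the known fibrations are not of that form.
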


\begin{proof}
We work with homogeneous coordinates \([z_0:\dots:z_N]\) on the weighted projective space \(\mathbb{P}(w^0,\dots,w^N)\). Recall that $\mathbb{P}(w)=\mathbb{P}(w^0, \dots, w^N)$ is the quotient of $\mathbb{C}^{N+1} \setminus \{0\}$ by the $\mathbb{C}^\times$ action
\[
t \cdot (z_0, \dots, z_N) = (t^{w^0} z_0, \dots, t^{w^N} z_N),
\]
where $t \in \mathbb{C}^\times$ and $w^0, \dots, w^N$ are positive integers (weights). The moment map for the torus action is
\[
\boldsymbol{m}_i(z) = \frac{w^i|z_i|^2}{\sum_{j=0}^N w^j|z_j|^2}, \qquad i=0,\dots,N,
\]
with image the standard \(N\)-simplex $\overline{\Delta}_N = \{\eta\in\mathbb{R}^{N+1}_{\geq 0}\mid \sum \eta_i=1\}.$

\smallskip 

In what follows we will consider the open simplex (denoted $\Delta_N$ or $\Delta_W$, depending on the context): the boundary corresponding to degenerate fibres, which we do not consider.

\paragraph{\bf 1. The reduced moment map on \(X_W\).}
The Calabi–Yau hypersurface \(X_W = \{W=0\}\) has complex dimension \(n = N-1\). The Newton polytope of \(W\) is a simplex \(\Delta_W \subset \Delta_N\) defined by the affine hyperplane
\[
\Delta_W = \left\{ \eta \in \Delta_N \;\middle|\; \sum_{i=0}^N w^i \eta_i = \frac{1}{d} \right\},
\]
where \(d\) is the degree of \(W\). This is a standard fact: the anticanonical divisor (the zero set of a section of \(\mathcal{O}(d)\)) corresponds to a level set of the moment map. The dimension of \(\Delta_W\) is \(N-1\).

The restriction \(\boldsymbol{m}|_{X_W}: X_W \to \Delta_W\) is a Lagrangian torus fibration, with generic fibre a real \(n\)-torus \(\mathbb{T}^{N-1}\). This is the SYZ fibration for toric Calabi–Yau hypersurfaces; see e.g. \cite[Section 3]{Auroux2007} or \cite[Lemma 3.2]{AAK}. The discriminant locus \(\mathcal{D}\subset\Delta_W\) (where the fibre degenerates) has measure zero.

\paragraph{\bf 2. The canonical exponential family on \(\Delta_W\).}
We consider the exponential family on the simplex restricted to the affine slice  \(\Delta_W\). The log‑partition function is
\[
\Psi(\theta) = \log \int_{\Delta_W} e^{\sum_i \theta^i \eta_i} \, d\eta,
\]
where \(dp\) is the Lebesgue measure on the affine subspace. Its Legendre dual is the negative entropy
\[
\Phi(\eta) = \sum_{i=0}^N \eta_i \log \eta_i,
\]
which is strictly convex on \(\Delta_W\). A direct computation shows that \((\Delta_W,\Phi)\) is a Monge–Ampère domain (GEMA). We set \(\mathscr{Y} = \Delta_W\) (the open simplex).

The projection 
\[
\pi : \mathfrak{H} \to \mathscr{Y}
\]
is defined by
\[
\pi(\psi_z) = \boldsymbol{m}(z),
\]
where $\psi_z$ is the wave function associated to a point $z \in \mathbb{P}(w)$, see \cite{Perelemov86}. This is a legitimate definition because the exponential family is parametrized by points in the weighted projective space (via the moment map image). 

By construction, \(\pi|_{X_W} = \boldsymbol{m}|_{X_W}\), hence it gives a Lagrangian torus fibration over \(\mathscr{Y}\).

\paragraph{\bf 3. Legendre duality and the mirror.}
The natural parameters are \(\theta^i = \partial\Phi/\partial \eta_i = \log \eta_i\) (up to an additive constant, which we set to zero). The Legendre transform yields
\[
\Psi(\theta) = \log \sum_{i=0}^N e^{\theta^i}, \qquad \eta_i = \frac{e^{\theta^i}}{\sum_j e^{\theta^j}}.
\]
For the mirror polynomial \(W'\) (transposed exponent matrix), its Newton polytope is the dual simplex \(\Delta_W^\vee\). By toric mirror symmetry (see \cite{HoriVafa2000} or \cite[Theorem 4.1]{Auroux2007}), the coordinates \(\theta^i\) become the moment map coordinates on \(X_{W'}\), and the same base \(\mathscr{Y}\) supports a Lagrangian torus fibration \(\boldsymbol{m}'|_{X_{W'}}: X_{W'} \to \mathscr{Y}\). The fibres are dual tori because the Legendre transform interchanges the character lattice with its dual.

\paragraph{\bf 4. Quotient by symmetry groups.}
The groups \(\mathbf{G}\) and \(\mathbf{G}'\) act by diagonal phases, leaving the moment map invariant. Hence the fibrations descend to the quotients \(X_W/\mathbf{G}\) and \(X_{W'}/\mathbf{G}'\) over the same base \(\mathscr{Y}\), with fibres \(T_p/\mathbf{G}\) and \(\check{T}_p/\mathbf{G}'\), which are dual tori (or orbifold tori). This is compatible with the BHK mirror construction (see \cite{AAK}).

\paragraph{\bf 5. Conclusion.}
We have exhibited a finite‑dimensional exponential family (the Dirichlet family on the affine slice \(\Delta_W\)) whose parameter space \(\mathscr{Y}\) is a Monge–Ampère domain. The reduced moment map gives a Lagrangian torus fibration on \(X_W\) over \(\mathscr{Y}\), and its Legendre dual gives the corresponding fibration on the mirror \(X_{W'}\). The groups \(\mathbf{G},\mathbf{G}'\) act compatibly, proving the theorem.
\end{proof}

\subsection{Example: the Fermat Calabi–Yau hypersurface}

For the Fermat polynomial $W = x_0^{n+2}+\cdots+x_{n+1}^{n+2}$ in $\mathbb{P}^{n+1}$, the weights are all $1$. The construction in Theorem~\ref{Thm:SYZ} specialises to the standard SYZ fibration: the reduced moment map (after imposing the Calabi–Yau condition) gives a Lagrangian torus fibration over an $n$-dimensional simplex. The log‑partition function is $\Psi(\theta)=\log\sum_{i=0}^{n+1} e^{\theta^i}$ and the dual potential is the negative entropy $\Phi(\eta)=\sum_{i=0}^{n+1} \eta_i\log \eta_i$, which is the well‑known Monge–Ampère structure on the simplex. Thus the theorem recovers the classical SYZ picture for the Fermat family.
\begin{rem}
Let $\rho$ be a point in the Monge–Ampère domain $\mathscr{Y}$. Given a smooth fibre $L_0=\pi^{-1}(\rho)$ and a collection of loops $(\gamma_1,\dots,\gamma_n)$ forming a basis of $H_1(L_0,\mathbb{Z})$, one can determine an affine (Monge–Ampère) chart centred at $\rho$. The affine coordinates are the symplectic areas $\int_{\Gamma_i}\omega$, where $\Gamma_i$ are cylinders traced by the loops as the fibre varies. This recovers the usual SYZ picture. The walls in the Monge–Ampère domain, where the fibration degenerates, will be naturally expressed in terms of probability densities; this will be the subject of a future article.
\end{rem}

\section{LG Toy model}\label{S:ToyModel1}

\subsection{Introduction to the toy model}
The state space of an $n$-dimensional quantum system is the set of all $n \times n$ positive semidefinite complex matrices of trace 1, known as \textit{density matrices}. We consider as a toy model the space of such matrices. For simplicity, we do not normalise the matrices (i.e. the trace is not required to be equal to 1) and omit the boundary of the cone given by $x^TAx=0$, where $A$ is a symmetric square matrix.  

Those matrices play an important role in the LG theory (see Sec.~\ref{S:LG1} and Appendix~\ref{S:LG}). It is possible to identify the rank one matrices, corresponding to pure states, with nonzero vectors in a complex Hilbert space of dimension $n$. Notice that the same state is described by a vector $\psi$ and $\kappa \psi$, where $\kappa \neq 0$, which leads to projective geometry. In this paper, we do not consider the infinite-dimensional case, where density matrices are replaced by density operators and the space of pure states is the complex projective space over the infinite-dimensional Hilbert space.

\smallskip 

The open cone of positive definite complex matrices forms a strictly convex self‑dual homogeneous cone. We refer to Appendix \ref{A:1} for details concerning those cones. Given a Hilbert space $\mathfrak{H}$, one defines a self‑dual cone $\mathscr{P}$ in $\mathfrak{H}$ by the set $$\mathscr{P}=\{ \zeta\in \mathfrak{H} \,|\, \langle \zeta,\eta \rangle \geq 0,\quad  \forall \eta \in \mathscr{P} \}.$$ We know from the classical representation theorem of Jordan–von Neumann–Wigner that there are five classes of irreducible formally real Jordan algebras. The transitively homogeneous self‑dual cone associated to a given class of Jordan algebras is then the set of positive elements of the Jordan algebra, with the Hilbert structure given by the natural trace~\cite{Kos62,Maa,Vin,PS,CoMa}. In fact, by \cite{Connes}, the category of von Neumann algebras is equivalent to the category of self‑dual facially homogeneous complex cones. 

\smallskip

Let $\K$ be a real division algebra (i.e. $\K$ is $\R, \C, \hH$ or $\oO$). The objects considered here are irreducible cones $\mathscr{P}_n(\K)$, represented as the space of positive definite symmetric $n\times n$ matrices with coefficients in $\K$. Note that the case of octonions differs from the others in the sense that we only have the cone $\mathscr{P}_3(\oO)$ formed from $3\times 3$ matrices with coefficients in $\oO$. Whenever there is no source of confusion, we write $\mathscr{P}$ rather than $\mathscr{P}_n(\K)$ to designate any cone depicted above.

\subsection{Main statement for the Toy Model}
The aim of this section is to prove the following fact.

\begin{thm}
Let $\mathscr{P}$ be an irreducible strictly convex cone, defined as previously. Then:
\begin{enumerate}[label=\arabic*)]
\item $\mathscr{P}$ is a non-compact symmetric space.  
\item $\mathscr{P}$ carries a pre-Frobenius structure. 
\item There exists a non-empty Frobenius locus $\F$ in $\mathscr{P}$, given by $\F=\exp{\frak{a}}$, where $\frak{a}\subset \ft$ is a Lie triple system i.e. satisfies $[[\frak{a},\frak{a}],\frak{a}]\subseteq \frak{a}$.
\end{enumerate}
\end{thm}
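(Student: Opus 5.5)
The plan is to treat the three claims in order, building each on the classical structure of $\mathscr{P}=\mathscr{P}_n(\K)$ as the cone of positive elements of a formally real Jordan algebra $J=\mathrm{Herm}_n(\K)$.

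\textbf{Claim 1.} I will realise $\mathscr{P}$ as a homogeneous space $G/K$. Here $G$ is the identity component of the automorphism group of the cone, acting by $x\mapsto gxg^{*}$ (for $\oO$, $n=3$, through the structure group of the exceptional Jordan algebra). The group $K$ is the stabiliser of the identity matrix, namely $U(n,\K)$, or $F_4$ in the octonionic case. The involution $\sigma(g)=(g^{*})^{-1}$ fixes $K$. The geodesic symmetry at $x$ is $s_x(y)=x\,y^{-1}x$, which in Jordan terms is $P(x)y^{-1}$. This is an isometry of the invariant metric $\langle u,v\rangle_x=\mathrm{tr}(x^{-1}ux^{-1}v)$ and fixes $x$ as an isolated point, so $\mathscr{P}$ is a Riemannian symmetric space.

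Let $\fg=\frak{k}\oplus\frak{p}$ be the Cartan decomposition, with $\frak{p}\cong J$ the Hermitian matrices. The exponential map restricted to $\frak{p}$ is a diffeomorphism onto $\mathscr{P}$. Therefore $\mathscr{P}$ is simply connected and has nonpositive curvature, and it is non‑compact because the Killing form is positive on $\frak{p}$. This is the standard Koecher--Vinberg theory, and I would cite \cite{Kos62,Vin} together with Appendix~\ref{A:1} rather than redo it.

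\textbf{Claim 2.} I will exhibit $\mathscr{P}$ as a GEMA and then invoke Proposition~\ref{P:preF}. Take the characteristic function $\varphi(x)=\int_{\mathscr{P}^*}e^{-\langle x,\xi\rangle}d\xi$. Because $\mathscr{P}$ is self‑dual, $\varphi(x)=c\,\det(x)^{-\dim J/r}$, where $r$ is the rank of $J$. The potential $\Psi=\log\varphi=-\tfrac{\dim J}{r}\log\det x$ is smooth and strictly convex, and it is $G$‑invariant up to an additive constant.

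The quantity $\det\mathrm{Hess}\,\Psi(x)$ transforms under $g\in G$ by $\det(g)^{-2}$. Homogeneity then gives $\det\mathrm{Hess}\,\Psi=c'\varphi(x)^2>0$, which is the elliptic Monge--Ampère equation~\eqref{E:EMA} with positive right‑hand side. The cone is not bounded, but Proposition~\ref{P:preF} only uses a smooth strictly convex $\Psi$ on an open convex domain. So I will apply it directly to obtain the potential pre‑Frobenius structure $(g,A,\nabla^0)$, where $g=\mathrm{Hess}\,\Psi$ coincides with the invariant metric above.

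\textbf{Claim 3.} This is the main obstacle. I will take $\ft$ to be the diagonal (Cartan) subspace of $\frak{p}$, i.e. the real diagonal matrices, which form a maximal abelian subspace. I then choose $\frak{a}\subseteq\ft$ as any linear subspace. Since $\ft$ is abelian, $[[\frak{a},\frak{a}],\frak{a}]=0\subseteq\frak{a}$, so $\frak{a}$ is a Lie triple system. Hence $\F=\exp\frak{a}$ is a totally geodesic flat submanifold, namely the positive diagonal matrices, i.e. an algebraic torus $(\R_{>0})^{k}$.

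On $\F$, in the coordinates $x_i=e^{t_i}$, the potential restricts, up to an additive constant, to $-c\sum_i\log x_i$. Its third derivative tensor is therefore diagonal: $A_{iii}=-2c\,x_i^{-3}$, and all other components vanish. The metric is diagonal as well. Consequently $\partial_i\circ\partial_j=\delta_{ij}\lambda_i\partial_i$, which is a commutative associative product, so WDVV holds identically.

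The delicate point is to verify that the restricted data on $\F$ really are the flat coordinates, connection and tensor induced from the ambient pre‑Frobenius structure. Concretely, $\F$ must be $\nabla^0$‑affine: the positive diagonal matrices form an affine slice of $J$ intersected with $\mathscr{P}$, so this holds. In addition, restricting $A$ and $g$ must commute with the product. I would first check this for $\K=\R$, $n=2$ by explicit computation, and then argue that the ambient diagonal structure decouples in general. Non‑emptiness is immediate because the identity matrix lies in $\F$.
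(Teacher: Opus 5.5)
Your proposal is correct. For (1) and (2) it follows the paper: (1) is quoted from standard Koecher--Vinberg/Helgason theory (the paper cites Nomizu's classification), and (2) is the paper's alternative Koszul--Vinberg argument fed into Proposition~\ref{P:preF}. Your identity $\det\mathrm{Hess}\,\Psi=c'\varphi^{2}$ is the precise form of the Monge--Amp\`ere equation for this potential; note that its right-hand side is not constant. There is one small slip in (1). The central factor $\R\cdot I$ of $\mathfrak{g}$ lies in $\mathfrak{p}$, and the Killing form vanishes on it. So non-compactness should be read off from $\exp\colon\mathfrak{p}\to\mathscr{P}$ being a diffeomorphism, not from positivity of the Killing form.

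For (3) your route is genuinely different from the paper's.
\begin{itemize}
\item The paper obtains a flat totally geodesic $\exp\mathfrak{a}$ from Helgason. It then asserts associativity because $T_e\F$ is a commutative associative algebra and $\F$ is flat.
\item You take $\mathfrak{a}$ to be the full real diagonal. Then $\F=D\cap\mathscr{P}$ is an open piece of a linear subspace, so $\nabla^0$, $g$ and $A$ restrict exactly to the Hessian data of $\Psi|_{\F}=-c\sum_i\log x_i$. You then verify $\partial_i\circ\partial_j=-2\delta_{ij}x_i^{-1}\partial_i$ at every point.
\end{itemize}
Your version gives an honest pointwise check of WDVV, which the paper's ``flat implies associative'' step does not supply on its own. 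The price is that it needs $\exp\mathfrak{a}$ to be $\nabla^0$-affine, so you should drop ``any linear subspace''. For instance, the paper's trace-zero choice gives the hypersurface $\{\prod_i x_i=1\}$, which is not affine. With the restricted $g$ and $A$, its induced product in log coordinates is $u\circ v=-2\bigl(uv-\tfrac1n(\sum_i u_iv_i)\mathbf{1}\bigr)$, and this fails to be associative once $n\ge 3$. Since (3) is existential, the full diagonal suffices.

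Finally, your planned $n=2$ test is unnecessary. For $\Psi=-c\log\det$ the ambient product is $u\circ v=-(ux^{-1}v+vx^{-1}u)$, a mutation of the Jordan product. It visibly preserves diagonal matrices at a diagonal point $x$, for every $n$ and every $\K$ (for $\oO$, read it through the Jordan triple product).
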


Statement 1 follows from a well known classification of symmetric spaces given by Nomizu~\cite{No54}. Statement 2 has two different proofs: one using elliptic Monge–Ampère equations (Proposition~\ref{P:pdsmPreF}) and another using Jordan algebra methods (Proposition~\ref{P:preF}). Statement 3 is obtained from the existence of totally geodesic submanifolds (Proposition~\ref{P:exist} and Proposition~\ref{P:Matrix}) and the Frobenius structure on them (Theorem~\ref{T:Frobenius}).

\subsection{Geometry of $\mathscr{P}$}
The cones $\mathscr{P}_n(\K)$ are irreducible symmetric cones (in fact, non‑compact symmetric spaces), where \(n\) is a positive integer determining the matrix size---the dimension of the cone depends on \(n\) (as shown in the table). For the Lorentz cone \(\Lambda_n\) (not listed in the table below but mentioned in the classification), \(n\) has a different meaning (the number of space dimensions). 

 The classification is summarized in the table \ref{T:ISC}:

\begin{table}[ht]
\centering
\begin{tabular}{|c|c|c|}
\hline
Cone & $G/K$ & Dimension \\
\hline
$\mathscr{P}_n(\R)$ & $GL_n(\R)/O_n$ & $\frac12 n(n+1)$ \\
$\mathscr{P}_n(\C)$ & $GL_n(\C)/U_n$ & $n^2$ \\
$\mathscr{P}_n(\hH)$ & $GL_n(\hH)/Sp_n$ & $n(2n-1)$ \\
$\mathscr{P}_3(\oO)$ & $GL_3(\oO)/F_4$ & $27$ \\
\hline
\end{tabular}
\caption{Irreducible symmetric cones}\label{T:ISC}
\end{table}
\vfill\eject

For each such cone, the tangent space at any point can be identified with the space of symmetric (or Hermitian) matrices, which carries a formally real Jordan algebra structure. The bilinear form is $\langle X,Y\rangle = \Re\,\mathrm{Tr}(XY)$.

\subsection{Pre-Frobenius structure on $\mathscr{P}$}

\begin{prop}\label{P:pdsmPreF}
Let $\mathscr{P}$ be an open cone of positive definite symmetric square matrices with coefficients in a real division algebra $\K$. Then $\mathscr{P}$ carries a pre‑Frobenius structure.
\end{prop}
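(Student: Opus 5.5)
The plan is to reduce the statement to Proposition~\ref{P:preF}. That proposition says that any GEMA $(\mathscr{D},\Psi)$, i.e.\ a convex domain with a smooth strictly convex solution of an elliptic Monge--Ampère equation, is a potential pre‑Frobenius domain. So it suffices to exhibit $\mathscr{P}=\mathscr{P}_n(\K)$ as such a domain.

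\textbf{Step 1: affine structure.} Regard $\mathscr{P}$ as an open convex cone inside the real vector space $V$ of Hermitian matrices over $\K$, with $\dim V=d$ as in Table~\ref{T:ISC}. It therefore inherits the flat torsion‑free connection $\nabla^0$ from $V$. Affine coordinates are taken with respect to a basis orthonormal for $\langle X,Y\rangle=\Re\,\mathrm{Tr}(XY)$.

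\textbf{Step 2: the potential.} Take $\Psi(x)=-\log\det x$. For $\K=\hH$ one uses the reduced (Moore/Dieudonné) determinant, and for $\mathscr{P}_3(\oO)$ the Jordan determinant, i.e.\ the generic norm of the Jordan algebra. Equivalently, up to a constant, $\Psi=\log$ of the Koszul--Vinberg characteristic function $\int_{\mathscr{P}^*}e^{-\langle x,\xi\rangle}d\xi$. The derivatives are computed via the Jordan inverse:
\[
d\Psi_x(X)=-\langle x^{-1},X\rangle,\qquad D^2\Psi_x(X,Y)=\langle P(x^{-1})X,Y\rangle,
\]
where $P$ is the quadratic representation. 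Since $P(x^{-1})$ is positive definite for $x\in\mathscr{P}$, $\Psi$ is smooth and strictly convex. The third derivative gives the symmetric cubic tensor
\[
A_x(X,Y,Z)=-2\langle P(x^{-1})X,\;\{\cdot\}\rangle ,
\]
which is, explicitly, the symmetrised Jordan triple product term $-\Re\,\mathrm{Tr}(x^{-1}Xx^{-1}Yx^{-1}Z+\dots)$.

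\textbf{Step 3: the Monge--Ampère equation.} Use homogeneity. The group $G$ of Table~\ref{T:ISC} acts linearly and transitively on $\mathscr{P}$, and $\det(g\cdot x)=\chi(g)\det x$ for a character $\chi$. Hence $\det\mathrm{Hess}\,\Psi$ transforms by a character as well, and one obtains
\[
\det \mathrm{Hess}\,\Psi(x)=c\,(\det x)^{-2d/r}=c\,e^{2d\Psi/r},
\]
with $r$ the rank and $c>0$. This is an elliptic Monge--Ampère equation $\det\mathrm{Hess}\,\Psi=f$ with $f>0$ smooth, so $(\mathscr{P},\Psi)$ is a GEMA.

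\textbf{Step 4: conclusion.} Applying Lemma~\ref{L:rank2,3} and Proposition~\ref{P:preF} then yields the quintuple $(\mathscr{P},\Psi,g,A,\nabla^0)$ as a potential pre‑Frobenius manifold. The multiplication is $\partial_a\circ\partial_b=\sum_c A_{ab}^c\partial_c$.

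\textbf{Main obstacle.} The hard part is the uniform treatment of $\K=\hH,\oO$, where $\det$ is not the naive matrix determinant. I would handle it by working throughout in the language of Euclidean Jordan algebras, using the generic norm and the quadratic representation. In that setting, positivity of $P(x^{-1})$ and the transformation law $\det P(g x)=\chi(g)^{2d/r}\det P(x)$ are standard facts (Faraut--Korányi), and they hold independently of $\K$.

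A second point needs care. The paper's definition of GEMA asks for a bounded strictly convex domain, but $\mathscr{P}$ is an unbounded cone. I would note that Proposition~\ref{P:preF} only uses openness of $\mathscr{D}$ and strict convexity of $\Psi$, so the argument goes through. Alternatively, one can restrict to the bounded, strictly convex trace‑one slices, where the same computation applies.
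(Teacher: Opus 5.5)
Your proof is correct and follows the paper's route: you realise $\mathscr{P}$ as a GEMA and invoke Proposition~\ref{P:preF}, and your potential $\Psi=-\log\det x$ is the Koszul--Vinberg potential $\ln\chi$ of the paper's alternative proof up to the factor $d/r$ and an additive constant ($\ln\chi=\tfrac{d}{r}\Psi+\mathrm{const}$, not merely $\Psi$ plus a constant). Where the paper only cites the hyperbolic-polynomial literature for a potential with $\det\mathrm{Hess}\,\Phi=c$, you verify the explicit equation $\det\mathrm{Hess}\,\Psi=c(\det x)^{-2d/r}$ via the quadratic representation, which is more self-contained; however, drop the fallback to trace-one slices, because for $n\ge 3$ they are not strictly convex (the midpoint of two orthogonal rank-one projections lies on the slice's boundary) and they would only give a structure on a hyperplane section, not on $\mathscr{P}$.
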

\begin{proof}
These cones can be identified with the space of positive definite quadratic forms. For each such cone, the determinant defines a hyperbolic polynomial, and the associated Monge–Ampère operator gives a strictly convex potential $\Phi$ satisfying $\det \mathrm{Hess}(\Phi)=c>0$ (see \cite{RT,BT,Al,AV}). Hence $\mathscr{P}$ is a GEMA. By Theorem~\ref{P:preF}, it is a pre‑Frobenius domain.
\end{proof}

An alternative proof uses the Koszul–Vinberg (KV) function $\chi(x)=\int_{\mathscr{P}^*}\exp(-\langle x,a^*\rangle) da^*$ and the potential $\Phi=\ln\chi$. Then $g_{ij}=\partial_i\partial_j\Phi$ is a Hessian metric and $A_{ijk}=\partial_i\partial_j\partial_k\Phi$ gives the symmetric rank‑three tensor. The multiplication defined by $\partial_a\circ\partial_b = \sum_c A_{ab}^c\partial_c$ satisfies the pre‑Frobenius axioms. This is detailed in Proposition~\ref{P:preF}.

\subsection{Totally geodesic submanifolds and Frobenius locus}

\begin{prop}\label{P:exist}
Assume $\mathscr{P}$ is one of the non‑compact symmetric spaces in the table above. There exist totally geodesic submanifolds $\F = \exp \frak{a}$, where $\frak{a}\subset \mathfrak{gl}_n(\K)$ is a Cartan subalgebra.
\end{prop}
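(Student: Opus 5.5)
The plan is to use the Cartan decomposition of the symmetric pair $(G,K)$ from Table~\ref{T:ISC}, so $\mathscr{P}=G/K$. We write $\mathfrak{g}=\mathfrak{gl}_n(\K)$ (for $\oO$, the Lie algebra of the structure group of the exceptional Jordan algebra) and take $\mathfrak{g}=\mathfrak{k}\oplus\mathfrak{p}$. Here $\mathfrak{k}$ consists of the skew-Hermitian matrices (the Lie algebra of $O_n$, $U_n$, $Sp_n$, $F_4$), and $\mathfrak{p}$ consists of the Hermitian matrices, identified with $T_{\mathbf 1}\mathscr{P}$ as noted above. The Riemannian exponential at the base point $\mathbf 1$ is the matrix exponential restricted to $\mathfrak{p}$. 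In the paper's notation $\ft$ plays the role of $\mathfrak{p}$. Accordingly, the ``Cartan subalgebra'' $\frak{a}$ should be read as a maximal abelian subspace of $\mathfrak{p}$, i.e.\ the real diagonal matrices, which is a Cartan subalgebra of $\mathfrak{gl}_n(\R)$ contained in $\mathfrak{p}$. We fix this choice.

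\textbf{Step 1.} Invoke the classical criterion (Cartan, cf.\ Nomizu~\cite{No54}, Helgason): for a Riemannian symmetric space $G/K$, a linear subspace $\frak{m}\subset\mathfrak{p}$ is the tangent space of a complete totally geodesic submanifold through the base point if and only if $\frak{m}$ is a Lie triple system, $[[\frak{m},\frak{m}],\frak{m}]\subseteq\frak{m}$. In that case the submanifold is $\exp\frak{m}\cdot o$. The curvature tensor at $o$ is $R(X,Y)Z=-[[X,Y],Z]$, and the proof rests on this.

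\textbf{Step 2.} Check that $\frak{a}$ is a Lie triple system. This is trivial because $\frak{a}$ is abelian, so $[\frak{a},\frak{a}]=0$. Hence $\F=\exp\frak{a}$ is a totally geodesic submanifold. Concretely, $\F$ is the set of positive diagonal matrices $\{\mathrm{diag}(e^{t_1},\dots,e^{t_n})\}\cong(\R_{>0})^n$. It is flat and is a maximal flat of rank $n$.

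\textbf{Step 3.} Verify directly that the geodesics of $\mathscr{P}$ starting in $\frak{a}$-directions stay in $\F$. For the invariant metric $\mathrm{Tr}(p^{-1}dp\,p^{-1}dp)$, these geodesics are $t\mapsto p^{1/2}\exp(tp^{-1/2}Xp^{-1/2})p^{1/2}$. For diagonal $p$ and $X$ this curve is diagonal, which confirms total geodesy without appeal to the abstract theorem.

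\textbf{Main obstacle.} The octonionic case $\mathscr{P}_3(\oO)$ is the delicate one. There $\mathfrak{gl}_3(\oO)$ is not a matrix Lie algebra, so I would work with the Jordan-algebraic symmetric-space structure $\mathfrak{str}(J)=\mathfrak{der}(J)\oplus L(J)$ and take $\frak{a}=L(\text{diagonal elements})$. The bracket $[L_a,L_b]$ vanishes for $a,b$ in a Jordan frame (the operator commutativity of a frame), so the Lie triple system property again holds. I would take the Peirce decomposition relative to the frame as the input for this commutativity check.
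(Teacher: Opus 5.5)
Your proposal is correct and takes essentially the paper's route. The paper's proof simply cites Helgason to say that exponentiating a maximal abelian subspace $\frak{a}\subset\ft$ gives a totally geodesic flat; you make this explicit through the Lie triple system criterion (reading ``Cartan subalgebra'' as a maximal abelian subspace of $\ft$, as the paper implicitly does) and add a direct geodesic check and a Jordan-algebraic treatment of $\mathscr{P}_3(\oO)$ that the paper omits. The only visible difference is that your flat is the full rank-$n$ set of positive diagonal matrices, whereas the paper's explicit example uses trace-zero diagonal $\frak{a}$ (the $(n-1)$-dimensional $\det=1$ slice); both are totally geodesic, so this does not affect the statement.
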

\begin{proof}
For each cone, the Lie algebra $\mathfrak{g}$ is semisimple and admits a Cartan subalgebra $\frak{a}$, which is a maximal abelian subspace. By the theory of symmetric spaces (see \cite{Hel}), $\exp\frak{a}$ is a totally geodesic flat submanifold.
\end{proof}

Explicitly, for $\K=\R$, $\frak{a}$ consists of diagonal matrices with trace zero, so $\F$ is the set of diagonal matrices with positive entries and determinant $1$, which is an $(n-1)$-dimensional torus. For $\K=\C$, $\F$ is a complex torus. In all cases, $\F$ is an algebraic torus.

\begin{prop}\label{P:Matrix}
For each cone $\mathscr{P}_n(\K)$, there exists an $(n-1)$-dimensional totally geodesic submanifold $\F = \exp\tilde{\frak{a}}$, where $\tilde{\frak{a}}$ is a Cartan subalgebra of $\mathfrak{gl}_n(\K)$. The tangent space $T_e\F$ carries a commutative associative unital algebra structure (diagonal matrices).
\end{prop}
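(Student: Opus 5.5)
The plan is to make the construction explicit and uniform in $\K$, using real diagonal matrices. Set
\[
\tilde{\frak{a}}=\{\mathrm{diag}(t_1,\dots,t_n)\mid t_i\in\R\},\qquad \frak{a}=\{X\in\tilde{\frak{a}}\mid \mathrm{tr}\,X=0\}.
\]
Real diagonal matrices are Hermitian for every $\K$, including $\mathscr{P}_3(\oO)$, where the diagonal of a Hermitian octonionic matrix is automatically real. So $\tilde{\frak{a}}\subset\frak{p}$, where $\frak{g}=\frak{k}\oplus\frak{p}$ is the Cartan decomposition of the Lie algebra of $G$ in Table~\ref{T:ISC}, and $\frak{p}$ is identified with the Hermitian matrices $=T_e\mathscr{P}$. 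Exponentiating gives the positive diagonal matrices $\exp\tilde{\frak{a}}$. The slice $\F=\exp\frak{a}$ consists of positive diagonal matrices of determinant $1$, and it has dimension $n-1$ as the statement requires.

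\textbf{Step 1: total geodesicity.} Since $\tilde{\frak{a}}$ is abelian, $[[\tilde{\frak{a}},\tilde{\frak{a}}],\tilde{\frak{a}}]=0$, so both $\tilde{\frak{a}}$ and $\frak{a}$ are Lie triple systems in $\frak{p}$. By the standard correspondence for symmetric spaces, as already invoked in Proposition~\ref{P:exist} (see \cite{Hel}), $\exp$ of a Lie triple system in $\frak{p}$ is a complete totally geodesic submanifold of $G/K=\mathscr{P}$.

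As a concrete check, the geodesics of the invariant metric through $e$ are $t\mapsto\exp(tX)$ for $X\in\frak{p}$. For $X\in\frak{a}$ these curves stay inside $\F$. At a general point $p\in\F$ the geodesics have the form $p^{1/2}\exp(tX)p^{1/2}$, and these also stay in $\F$ because all the matrices involved commute. Commutativity also shows that $\F$ is flat.

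\textbf{Step 2: the Cartan subalgebra claim.} I would check that $\tilde{\frak{a}}$ is maximal abelian in $\frak{p}$. A Hermitian matrix commuting with a diagonal matrix with distinct real entries must itself be diagonal, and its diagonal entries must be real. This is the sense of ``Cartan subalgebra'' the statement needs; it agrees with the fact that the real rank of each cone in Table~\ref{T:ISC} is $n$, or $n-1$ after the determinant normalisation.

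\textbf{Step 3: the algebra structure.} On $T_e\mathscr{P}$, take the Jordan product $X\circ Y=\tfrac12(XY+YX)$, which is the product compatible with $\langle X,Y\rangle=\Re\,\mathrm{Tr}(XY)$ used in the pre‑Frobenius structure. Restricted to $\tilde{\frak{a}}$ it becomes entrywise multiplication $\mathrm{diag}(s)\circ\mathrm{diag}(t)=\mathrm{diag}(s_it_i)$. This product is commutative, associative, and has unit $I$, so $\tilde{\frak{a}}\cong\R^n$ as an algebra. The same holds for $\K=\oO$, since the diagonal of $H_3(\oO)$ is an associative subalgebra.

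\textbf{Main obstacle.} The hardest point is reconciling the dimension count with unitality. The trace‑zero space $\frak{a}=T_e\F$ is not closed under $\circ$ and does not contain $I$. I would handle this by working in the cone modulo positive scalars, $\mathscr{P}\cong\R_{>0}\times\{\det=1\}$. Under this splitting, $T_e\F$ is identified with $\tilde{\frak{a}}/\R I$, or equivalently the unital algebra $\tilde{\frak{a}}$ is viewed as $T_e$ of the $n$‑dimensional flat $\exp\tilde{\frak{a}}$, which is $\F$ times the radial direction. I would state the algebra structure for $\tilde{\frak{a}}$ and transport it to $\F$ through this identification. If that transport turns out to be awkward, the fallback is to state the algebra claim for the $n$‑dimensional flat $\exp\tilde{\frak{a}}$ and keep $\F$ as its determinant‑one slice.
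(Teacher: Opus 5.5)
Your Steps 1--3 follow the paper's route. The paper gives no separate proof of this proposition. It relies on three things: the proof of Proposition~\ref{P:exist} (exponentiate a maximal abelian subspace, which by Helgason gives a totally geodesic flat), the explicit diagonal description that follows it, and the remark in Theorem~\ref{T:Frobenius} that the Jordan product reduces to the diagonal product. You do this more carefully: you use maximal abelian subspaces of $\mathfrak p$ rather than a ``Cartan subalgebra'' of a $\mathfrak g$ that is only reductive, you actually check maximality, and you cover $\K=\oO$.

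The gap is in your last paragraph, and it comes from the statement itself: no single $\F$ has all three required properties. If $\tilde{\mathfrak a}$ is maximal abelian in $\mathfrak p$, then $\exp\tilde{\mathfrak a}$ is the $n$-dimensional flat of positive diagonal matrices, since $\mathscr{P}_n(\K)$ has rank $n$. The trace-zero part $\mathfrak a$ gives dimension $n-1$, but $\mathfrak a$ is not maximal abelian ($I$ commutes with it) and does not contain $I$. It is also not closed under $\circ$: $\mathrm{diag}(1,-1,0,\dots,0)^{\circ 2}=\mathrm{diag}(1,1,0,\dots,0)\notin\mathfrak a$. The paper uses the trace-zero $\mathfrak a$ for the dimension count and then asserts, in the proof of Theorem~\ref{T:Frobenius}, that this tangent space is a unital associative algebra under the Jordan product, which is false. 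You have correctly found the point the paper passes over.

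Your primary repair, however, does not close the gap. $\R I$ is not an ideal of $(\tilde{\mathfrak a},\circ)\cong\R^n$, because an ideal containing the unit is the whole algebra. So $\tilde{\mathfrak a}/\R I$ inherits no product, and there is no way to transport an $n$-dimensional algebra onto the $(n-1)$-dimensional $T_e\F$. The natural candidate on $\mathfrak a$ is $X\star Y=X\circ Y-\frac1n\mathrm{Tr}(X\circ Y)\,I$; it is commutative, but it has no unit and is not associative for $n\ge 3$. Choosing an arbitrary linear isomorphism $T_e\F\cong\R^{n-1}$ would make the claim vacuous and unrelated to the pre-Frobenius product.

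Only your fallback is sound, and you should present it as a corrected statement, not as a proof of the one given:
\begin{itemize}
\item $(\tilde{\mathfrak a},\circ)\cong\R^n$ is the unital commutative associative tangent algebra of the $n$-dimensional flat $\exp\tilde{\mathfrak a}$;
\item $\exp\mathfrak a$ is an $(n-1)$-dimensional totally geodesic flat whose tangent space is not a subalgebra.
\end{itemize}
If dimension $n-1$ is essential, take $\mathfrak b=\{\mathrm{diag}(t_1,t_1,t_3,\dots,t_n)\}$ instead. It is abelian in $\mathfrak p$, contains $I$ and is closed under $\circ$, so $\exp\mathfrak b$ is an $(n-1)$-dimensional totally geodesic flat with tangent algebra isomorphic to $\R^{n-1}$. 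But $\mathfrak b$ is not a Cartan subalgebra, so one clause of the statement has to be dropped in any case.
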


\begin{thm}\label{T:Frobenius}
The manifold $\F$ defined above is a Frobenius manifold.
\end{thm}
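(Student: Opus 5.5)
The plan is to realise $\F$ concretely as the set of diagonal matrices with positive entries, $\F=\{\mathrm{diag}(e^{t_1},\dots,e^{t_n})\}$, or its determinant-one slice. Restricting to determinant one gives the $(n-1)$-dimensional version of Proposition~\ref{P:Matrix}. Then I will check the axioms of a Frobenius manifold directly in the flat coordinates coming from $\tilde{\frak{a}}$.

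\textbf{Step 1: flat structure and metric.} Since $\F$ is a totally geodesic flat submanifold (Proposition~\ref{P:exist}), the exponential map $\tilde{\frak{a}}\to\F$ is an isometry onto its image. The coordinates $t=(t_1,\dots,t_n)$ on $\tilde{\frak{a}}$ are therefore affine flat coordinates, and they supply $\nabla^0$. Next I restrict the potential of Proposition~\ref{P:pdsmPreF}, namely the Koszul--Vinberg potential $\Phi=-\log\det$, or equivalently $\ln\chi$ up to a constant. Its restriction is $\Phi|_\F$, a function of $x_i=e^{t_i}$. It is more convenient to work in the coordinates $x_i$, which are affine coordinates of the ambient cone restricted to the diagonal. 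There $\Phi|_\F=-\sum_i\log x_i$ is a separable potential. Its Hessian $g_{ij}=\delta_{ij}x_i^{-2}$ is diagonal, and its third derivatives satisfy $A_{ijk}=0$ unless $i=j=k$, with $A_{iii}=-2x_i^{-3}$. The resulting domain $\F\cong\R_{>0}^n$ is then a GEMA with flat coordinates $x$. By Proposition~\ref{P:preF} it is a potential pre-Frobenius manifold.

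\textbf{Step 2: associativity.} The structure constants are
\[
A_{ij}^{k}=\sum_e A_{ije}g^{ek}=-2x_i^{-1}\delta_{ij}\delta_{ik}.
\]
Thus $\partial_i\circ\partial_j=\delta_{ij}\lambda_i\partial_i$ with $\lambda_i=-2/x_i$. This multiplication is componentwise, like that of diagonal matrices, and is therefore commutative and associative; this is exactly the algebra on $T_e\F$ claimed in Proposition~\ref{P:Matrix}. So the WDVV equations hold trivially, because every term $\sum_e A_{abe}g^{ef}A_{fcd}$ vanishes unless $a=b=c=d$.

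\textbf{Step 3: unit and obstacle.} If a unit is needed, I would take $e=\sum_i \lambda_i^{-1}\partial_i=-\tfrac12\sum_i x_i\partial_i$, the Euler-type field. I expect the main subtlety to be the determinant-one slice. On the hyperplane $\sum t_i=0$, the coordinates $x_i$ are no longer affine, so the separable structure is lost. I would handle this by working with the full diagonal torus, which has dimension $n$, and obtaining the $(n-1)$-dimensional $\F$ as a quotient by the radial $\R_{>0}$-scaling. Alternatively, I would choose coordinates $x_1,\dots,x_{n-1}$ and verify associativity using the fact that the induced multiplication remains the restriction of componentwise multiplication on the Jordan algebra of diagonal matrices. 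For the cases $\K=\C,\hH,\oO$ I would note that the diagonal subalgebra of each Jordan algebra is real diagonal, so the same computation applies verbatim.
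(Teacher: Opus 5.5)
Your computation on the full positive diagonal torus $\{\mathrm{diag}(x_1,\dots,x_n):x_i>0\}$ is correct. Step~3, however, has a genuine gap: neither proposed reduction to the $(n-1)$-dimensional determinant-one slice works, and for $n\ge 3$ no argument can.

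\textbf{The slice is not closed under the product.} At $P=I$ one has $T_I\F=\{X\ \text{diagonal}:\mathrm{tr}\,X=0\}$, but $\mathrm{tr}(XY)\neq 0$ in general. The product induced by $g|_\F$ and $A|_\F$ is therefore the $g$-orthogonal projection $X\star Y=-2\bigl(XY-\tfrac1n\mathrm{tr}(XY)I\bigr)$. A direct computation gives $(X\star Y)\star Z-X\star(Y\star Z)=\tfrac4n\bigl(\mathrm{tr}(YZ)X-\mathrm{tr}(XY)Z\bigr)$. For $n=3$, $X=\mathrm{diag}(1,-1,0)$ and $Y=Z=\mathrm{diag}(1,0,-1)$, this equals $\tfrac43\mathrm{diag}(1,-2,1)\neq 0$, so associativity fails.

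\textbf{The quotient route fails for the same reason.} The radial field $\sum_i x_i\partial_i$ equals $-2e$, and $\R e$ is not an ideal because $e\circ X=X$. Hence $\circ$ does not descend to $T/\R e$. Identifying the quotient with $e^{\perp}=\{\mathrm{tr}(P^{-1}X)=0\}$ just gives back $\star$. Also, in the chart $x_1,\dots,x_{n-1}$ the restricted potential $-\log\det$ is identically zero, so it induces no metric at all.

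So for $n\ge3$ the slice, with the structure induced from $\mathscr{P}$, is not Frobenius. The statement holds only for the $n$-dimensional torus $\exp$ of all real diagonal matrices. This is also the only reading under which the paper's own proof makes sense, since it uses that $T_p\F$ is a \emph{unital} associative algebra, which the trace-zero diagonals are not. You should drop the reduction and state the result for the full torus.

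For the full torus your argument is correct and takes a different route from the paper. The paper argues abstractly: $T_p\F$ is the diagonal Cartan algebra, associative and unital under the Jordan product, with an invariant trace form, and flatness of $\F$ is then asserted to give associativity. You instead compute $g$, $A$ and $\circ$ from the restricted potential and obtain $X\circ Y=-2XYP^{-1}$. This does two things the paper does not:
\begin{itemize}
\item it actually identifies the pre-Frobenius product with (a mutation of) the diagonal matrix product, which the paper takes for granted;
\item it checks associativity directly, which is necessary because flatness does not imply it (the slice above is flat).
\end{itemize}
To justify that your structure is the one induced from $\mathscr{P}$, add one observation. At diagonal $P$, diagonal directions are $g$-orthogonal to off-diagonal ones and $A(\text{diag},\text{diag},\text{off-diag})=0$, so $\F$ is closed under the ambient product.

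Finally, in Step~1 the coordinates $t_i$ are flat for the Levi-Civita connection of $g$, not for $\nabla^0$: in them $\Phi|_\F=-\sum t_i$ has zero Hessian. This is harmless, since $\circ$ depends only on the tensors $g$ and $A$. In $t$-coordinates $g=\sum dt_i^2$ and $A=-2\sum dt_i^3$ are constant, and the unit $-\tfrac12\sum\partial_{t_i}$ is flat. That is a Frobenius structure even in Manin's stricter sense.
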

\begin{proof}
$\F$ is a totally geodesic submanifold of the pre‑Frobenius manifold $\mathscr{P}$. Its tangent space is the Cartan subalgebra, which is an associative commutative unital algebra under the Jordan product (which reduces to ordinary matrix multiplication for diagonal matrices). The restriction of the bilinear form $\langle X,Y\rangle = \Re\,\mathrm{Tr}(XY)$ is non‑degenerate and satisfies $\langle X\circ Y,Z\rangle = \langle X,Y\circ Z\rangle$. Hence $T_p\F$ is a Frobenius algebra. Since the curvature of $\F$ vanishes (it is flat), the pre‑Frobenius structure on $\F$ satisfies the associativity condition, so $\F$ is a Frobenius manifold.
\end{proof}

\subsection{Application to Calabi–Yau manifolds}
The real cone $\mathscr{P}_n(\R)$ parametrises principally polarized real tori of dimension $n$, which are Calabi–Yau manifolds. Indeed, to each $Y\in\mathscr{P}_n(\R)$ one associates the torus $T_Y = \R^n / Y\mathbb{Z}^n$. This gives a torus fibration over $\mathscr{P}_n(\R)$. The complex cone $\mathscr{P}_n(\C)$ parametrises principally polarized complex tori (abelian varieties). This illustrates the SYZ picture.

\subsection{Conclusion of the toy model}
The cones $\mathscr{P}_n(\K)$ are Monge–Ampère domains and hence pre‑Frobenius manifolds. They contain flat totally geodesic submanifolds (algebraic tori) that are Frobenius manifolds. The real cone provides an explicit example of a Monge–Ampère domain parametrising Calabi–Yau tori.

\appendix 
\section{Convex symmetric cones}\label{A:1}

\subsubsection{Notations}
Let $\mathscr{P} \subset V$ be a convex cone in an $n$-dimensional vector space $V$, over the real number field. 
$\K$ is a division algebra; $\mathscr{P}_n(\K)$ is the irreducible symmetric cone of $n\times n$  positive definite matrices with coefficients in $\K$. 

\subsubsection{Strictly convex cones} 
In the following we always consider {\it strictly convex cones}. Note that for brevity we simply refer to them as {\it convex cones.} 
Recall some elementary notions (see \cite{FK} for further information). 

\begin{dfn}
Let $V$ be a finite dimensional real vector space. Let $\langle-,-\rangle$ be a non-singular symmetric bilinear form on $V$.
A subset $\mathscr{P} \subset V$ is a convex cone if and only if $x,y \in \mathscr{P}$ and $\lambda,\mu >0$ imply $\lambda x+\mu y \in \mathscr{P}$.
\end{dfn} 

\subsubsection{Homogeneous cones} 
The automorphism group $G(\mathscr{P})$ of an open convex cone $\mathscr{P}$ is defined by 
\[G(\mathscr{P})=\{g\in GL(V)\, |\, g\mathscr{P}=\mathscr{P}\}\]
An element $g\in GL(V)$ belongs to $G(\mathscr{P})$ iff $g\overline{\mathscr{P}}=\overline{\mathscr{P}}$ \cite{FK}. So, $G(\mathscr{P})$ is a closed subgroup of $GL(V)$ and forms a Lie group. 
The cone $\mathscr{P}$ is said to be {\it homogeneous} if $G(\mathscr{P})$ acts transitively upon $\mathscr{P}$.

\smallskip 

\subsubsection{Symmetric cones} 
From homogeneous cones one can construct symmetric convex cones. Let us introduce the definition of an open dual/polar cone. An open dual/polar cone $\mathscr{P}^*$ of an open convex cone is defined by $\mathscr{P}^*=\{y\in V\, |\, \langle x,y \rangle>0,\, \forall\, x\in \overline{\mathscr{P}}\setminus 0 \}$. A homogeneous convex cone $\mathscr{P}$ is symmetric if $\mathscr{P}$ is self-dual i.e. $\mathscr{P}^*=\mathscr{P}$. Note that if $\mathscr{P}$ is homogeneous then so is $\mathscr{P}^*$. 
 
 \begin{rem} 
 If $\mathscr{P}$ is a symmetric open cone in $V$,  then $\mathscr{P}$ is a symmetric Riemann space.    
\end{rem}

\smallskip

\subsubsection{Automorphism group of symmetric cones}  Let us go back to the automorphism group of $\mathscr{P}$. This discussion relies on \cite[Prop I.1.8, Proposition I.1.9]{FK}.

\smallskip 

Let $\mathscr{P}$ be a symmetric cone in $V$.  For any point $a\in \mathscr{P}$ the stabilizer of $a$ in $G(\mathscr{P})$ is given by 
\[G_a=\{g\in G(\mathscr{P})\, |\, ga=a\}.\]

By \cite[Prop I.1.8]{FK}, if $\mathscr{P}$ is a proper open homogeneous convex cone then for any $a$ in $\mathscr{P}$, $G_a$ is compact. Now, if $H$ is a compact subgroup of $G$ then $H\subset G_a$ for some $a$ in $\mathscr{P}$. This means that the groups $G_a$ are all maximal compact subgroups of $G$ and that if $\mathscr{P}$ is homogeneous then all these subgroups are isomorphic. 

By \cite[Proposition I.1.9]{FK}, if $\mathscr{P}$ is a symmetric cone, there exist points $e$ in $\mathscr{P}$ such that $G(\mathscr{P})\cap O(V)\subset G_e$, where $O(V)$ is the orthogonal group of $V$. For every such $e$ one has $G_e=G\cap O(V)$ 

Suppose $\mathscr{P}$ is a convex homogeneous domain in $V$. Assume that
\begin{itemize}
   \item[---]   $G(\mathscr{P})$ is the group of all automorphisms;
   \item[---]   $G_e=K(\mathscr{P})$ is the stability subgroup for some point $e\in \mathscr{P}$;
   \item[---]   $T(\mathscr{P})$ is a maximal connected triangular subgroup of $G(\mathscr{P}).$ 
\end{itemize}

\smallskip 

Following \cite[Theorem 1]{Vin}, we have: \[G(\mathscr{P})=K(\mathscr{P})\cdot T(\mathscr{P}),\] where $K(\mathscr{P}) \cap T(\mathscr{P}) = e$ and the group  $T$ acts simply transitively. 

This decomposition on the Lie group side leads naturally to its Lie algebra. Cartan's decomposition for the Lie algebra tells us that $\fg=\textgoth{k} \oplus\ft,$ 

where:

\begin{itemize}
   \item[---]   $\frak{t}$ can be identified with the tangent space of $\mathscr{P}$ at $e$. 

   \item[---]   $\textgoth{k}$ is the Lie algebra associated to $K(\mathscr{P})$
\end{itemize}
 and
\[[\ft,\ft]\subset \textgoth{k},\quad [\textgoth{k},\ft]\subset \ft.\]

\begin{thm}\cite[Theorem 3.3]{Hel}

\begin{enumerate}
\item Let $M$ be a Riemannian globally symmetric space and $p$ is any point in $M$. If $G$ is a Lie transformation group of $M$ (a Lie group) and $K$ is the subgroup of $G$ which leaves $p$ fixed, then $K$ is a compact subgroup of $G$ 
and $G/K$ is analytic diffeomorphic to $M$ under the mapping $gK\to g\cdot p$, $g\in G$. 
\item For a Riemannian symmetric space \( G / K\), there exists an involutive automorphism \(\sigma\) of \(G\) such that $K$ lies between the closed subgroup $K_\sigma$ of all fixed points of $\sigma$ and the identity component of $K_{\sigma}$. The group $K$ contains no normal subgroup of $G$ other than $\{e\}$.
\item Let $\fg$ and $\textgoth{k}$ denote the Lie algebras of $G$ and $K$, respectively. Then $\textgoth{k} =\{ X\in \fg: (d\sigma)_eX=X\} $ and if $\ft=\{X\in \fg\, |\, (d\sigma)_eX=-X\}$ we have $\fg=\textgoth{k} \oplus \ft$. Le $\pi$ be the natural mapping $g\to g\cdot p$ of $G$ onto $M$.
Then $(d\pi)_e$ maps $\textgoth{k}$ into $\{0\}$ and $\ft$ isomorphically onto $T_pM$. If $X\in \ft$ then the geodesic emanating from $p$ with tangent vector $(d\pi)_eX$ is given by $\exp{tX}\cdot p$. Moreover, if $Y\in T_pM$, then $(d\exp{tX})_p(Y)$ is the parallel translate of Y along the geodesic. 
\end{enumerate}
\end{thm}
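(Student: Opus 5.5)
The statement is Helgason's Theorem 3.3, so I will reduce each part to standard facts about isometry groups and the geodesic symmetry $s_p$, rather than rederive Lie theory.

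\textbf{Part (1).} Since $M$ is globally symmetric, it is complete and homogeneous, and I will take $G$ to be (a closed subgroup of) the isometry group acting transitively. Myers--Steenrod makes $I(M)$ a Lie transformation group. The isotropy group $K$ of $p$ embeds, via $k\mapsto dk_p$, into $O(T_pM)$, because an isometry is determined by its value and differential at one point. This embedding has closed image, so $K$ is compact. The orbit map $gK\mapsto g\cdot p$ is then a bijective, equivariant map of constant rank. Transitivity together with Sard's theorem forces it to be a diffeomorphism, and analyticity follows because the actions are real analytic.

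\textbf{Part (2).} I define $\sigma(g)=s_p g s_p$. It is an involutive automorphism of $G$. If $\sigma(k)=k$ for $k\in K$ fails, note that $k$ and $s_pks_p$ both fix $p$ and have equal differential at $p$, since $ds_p=-\mathrm{id}$ commutes with $dk_p$. So $K\subset K_\sigma$. Conversely, let $g$ lie in the identity component of $K_\sigma$. Then $g$ commutes with $s_p$, so $g\cdot p$ is a fixed point of $s_p$. Fixed points of $s_p$ are isolated near $p$, and a connected family starting at $p$ must therefore stay at $p$. This places $(K_\sigma)_0$ inside $K$. Finally, a normal subgroup $N\subset K$ fixes $p$, hence fixes every $gp$, hence is trivial by effectiveness.

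\textbf{Part (3).} Differentiating $\sigma$ gives the $\pm1$ eigenspace decomposition $\mathfrak g=\mathfrak k\oplus\mathfrak t$. The identity $\mathfrak k=\mathrm{Fix}(d\sigma)$ follows from part (2), since $K$ and $K_\sigma$ have the same identity component. The map $d\pi_e$ kills $\mathfrak k$ because $K$ fixes $p$. It is surjective by transitivity, so by a dimension count it maps $\mathfrak t$ isomorphically onto $T_pM$.

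For $X\in\mathfrak t$, I will show that $\gamma(t)=\exp(tX)\cdot p$ is a geodesic. The key identity is
\[
s_{\gamma(t/2)}\, s_p=\exp(tX).
\]
I plan to check it through the one-parameter transvections $s_{\gamma(t/2)}s_p$ along a geodesic, compared via uniqueness of one-parameter subgroups. Given the identity, $\gamma$ coincides with the geodesic through $p$ with initial velocity $d\pi_e X$. The parallel-transport claim follows because a transvection $s_{\gamma(t/2)}s_p$ reverses sign twice along $\gamma$. It therefore maps parallel fields along $\gamma$ to parallel fields, and so its differential is parallel translation.

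\textbf{Main obstacle.} The hardest step is proving that $t\mapsto\exp(tX)\cdot p$ is a geodesic, that is, matching group exponentials with Riemannian geodesics. I will first establish that the transvections $T_t$ along a given geodesic form a one-parameter subgroup. I will then identify its generator as lying in $\mathfrak t$, using $\sigma(T_t)=T_{-t}$.
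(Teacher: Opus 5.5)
Your argument is essentially Helgason's own proof, which is all the paper relies on: it gives no argument and only cites Helgason. Two points need repair before it is complete.

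\textbf{First, the choice of $G$.} Helgason's theorem is stated for $G=I_0(M)$, a hypothesis the paper's transcription drops. Your hedge ``(a closed subgroup of) the isometry group acting transitively'' is not enough. Your step ``$\sigma(g)=s_pgs_p$ is an involutive automorphism of $G$'' needs $s_pGs_p=G$, and this can fail.

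Take $M=S^3\cong SU(2)$ with its bi-invariant metric, $p=e$ (so $s_e(x)=x^{-1}$), and let $G$ be the closed transitive group of left translations $L_g$. Then $s_eL_gs_e=R_{g^{-1}}$, which is not a left translation unless $g=\pm1$. The conclusions themselves also fail here. Since $K=\{e\}$, parts (2)--(3) would force $K_\sigma$ to be discrete, hence $d\sigma=-\mathrm{id}$ on $\mathfrak{su}(2)$. That map is not a Lie algebra automorphism, because $[\mathfrak{su}(2),\mathfrak{su}(2)]\neq 0$.

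So fix $G=I_0(M)$. Conjugation by $s_p\in I(M)$ preserves the identity component. Transitivity of $I_0(M)$ comes from the transvections, which are joined to $e$. With this choice, your arguments for (1) go through: isotropy embedding, Sard, orbit map. So do your arguments for $K\subset K_\sigma$, for $(K_\sigma)_0\subset K$ via the isolated fixed point, for the normal-subgroup claim, and for $\mathfrak g=\mathfrak k\oplus\mathfrak t$ with $(d\pi)_e|_{\mathfrak t}$ an isomorphism.

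\textbf{Second, the geodesic step is written circularly.} You set $\gamma(t)=\exp(tX)\cdot p$ and then use $s_{\gamma(t/2)}s_p$ ``along $\gamma$''. Those maps have the transvection properties only once $\gamma$ is known to be a geodesic. You also derive the parallel-transport property after the key identity, although it is needed to prove the one-parameter subgroup property. Your ``main obstacle'' paragraph is close to the correct order, which is Helgason's:
\begin{enumerate}
\item Let $\beta$ be the Riemannian geodesic with $\beta'(0)=(d\pi)_eX$, and put $T_t=s_{\beta(t/2)}s_p$.
\item Show that $T_t\beta(s)=\beta(s+t)$ and that $dT_t$ sends $Y(s)$ to $Y(s+t)$ for every parallel field $Y$ along $\beta$. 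This is your sign-reversal argument, and it does not involve $\exp$.
\item Then $T_tT_s$ and $T_{t+s}$ agree at $p$ together with their differentials, so $T_{t+s}=T_tT_s$, and $T_t=\exp(tZ)$ for some $Z\in\mathfrak g$.
\item The relation $\sigma(T_t)=T_{-t}$ gives $Z\in\mathfrak t$.
\item Since $(d\pi)_eZ=\beta'(0)=(d\pi)_eX$ and $(d\pi)_e$ is injective on $\mathfrak t$, we get $Z=X$.
\end{enumerate}
Both remaining claims then follow from $\exp(tX)=T_t$: the orbit $\exp(tX)\cdot p$ is the geodesic $\beta$, and $(d\exp tX)_p$ is parallel translation along it.
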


\subsubsection{Classification}
Any symmetric cone (i.e. homogeneous and self-dual) $\mathscr{P}$ is in a unique way isomorphic to the direct product of irreducible symmetric cones $\mathscr{P}_i$ (cf. \cite[Proposition III.4.5]{FK}). 
\medskip 

\begin{prop}~\label{P:Vclass}
Each irreducible homogeneous self--dual cone belongs to one
of the following classes:

\smallskip

\begin{table}[ht]
    \centering
    \begin{tabular}{|c|c|l|}
  \hline
Nb & Symbol & Irreducible symmetric cones \\
 \hline
 1. &      $ \mathscr{P}_n(\R)$ &  Cone of $n \times n$ positive definite symmetric real matrices. \\
     &  & \\
      
    2.  &      $ \mathscr{P}_n(\C)$ &  Cone of $n \times n$ positive definite self-adjoint complex matrices. \\
&  & \\
       3.  &    $ \mathscr{P}_n(\hH)$ &  Cone of $n \times n$ positive definite self-adjoint quaternionic matrices. \\
           &   & \\
        4. &   $ \mathscr{P}_3(\oO)$ & Cone of $3 \times 3$  positive definite self-adjoint octavic matrices. \\
           &   & \\
    5. &    $\Lambda_n$    & Lorentz cone  given by $x_0>\sqrt{\sum_{i=1}^n x_i^2}$ (spherical cone). \\
        &   & \\
       \hline
    \end{tabular}
    \caption{Irreducible symmetric cones}
    \label{tab:cones}
\end{table}
\end{prop}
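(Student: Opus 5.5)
The statement to be proved is Proposition~\ref{P:Vclass}: each irreducible homogeneous self-dual cone is one of $\mathscr{P}_n(\R)$, $\mathscr{P}_n(\C)$, $\mathscr{P}_n(\hH)$, $\mathscr{P}_3(\oO)$ or $\Lambda_n$. The plan is to reduce it to the Jordan--von Neumann--Wigner classification of simple formally real (Euclidean) Jordan algebras, following \cite{FK}.

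The argument has four steps.

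\textbf{Step 1 (cone to Jordan algebra).} Let $\mathscr{P}\subset V$ be symmetric. I would use the Koszul--Vinberg characteristic function $\chi(x)=\int_{\mathscr{P}^*}e^{-\langle x,a^*\rangle}\,da^*$ and fix a base point $e$ as in \cite[Prop.~I.1.9]{FK}, so that $G_e=G(\mathscr{P})\cap O(V)$.

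The Cartan decomposition $\fg=\textgoth{k}\oplus\ft$ recalled above identifies $\ft$ with $V\simeq T_e\mathscr{P}$. For $x\in V$, let $L(x)\in\ft$ be the unique element with $L(x)e=x$. Then define $x\circ y:=L(x)y$.

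\textbf{Step 2 (Jordan axioms and Euclidean property).} I would check three things:
\begin{enumerate}[label=(\roman*)]
\item commutativity, which uses that $\ft$ consists of self-adjoint operators together with self-duality;
\item the Jordan identity $[L(x),L(x^2)]=0$, obtained from $[\ft,\ft]\subset\textgoth{k}$ and $\textgoth{k}e=0$;
\item formal reality: $\langle x\circ y,z\rangle=\langle y,x\circ z\rangle$ with $\langle x,x\rangle>0$ forces $\sum x_i^2=0\Rightarrow x_i=0$.
\end{enumerate}
Conversely, $\mathscr{P}$ should be recovered as the interior of the set of squares $\{x^2\}$, i.e.\ $\exp(\ft)\cdot e$. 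This gives a bijection between symmetric cones and Euclidean Jordan algebras, as in \cite[Thm.~III.3.1]{FK}.

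\textbf{Step 3 (irreducibility).} Under this bijection, irreducibility of $\mathscr{P}$ corresponds to simplicity of the Jordan algebra. The reason is that a decomposition $V=V_1\oplus V_2$ into ideals gives $\mathscr{P}=\mathscr{P}_1\times\mathscr{P}_2$, and conversely; this is compatible with \cite[Prop.~III.4.5]{FK}.

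\textbf{Step 4 (classification).} This is the main obstacle. I would use the Peirce decomposition with respect to a Jordan frame $c_1,\dots,c_r$, where $r$ is the rank:
\[
V=\bigoplus_{i\le j}V_{ij}.
\]
The key lemmas are:
\begin{itemize}
\item all off-diagonal spaces $V_{ij}$ have a common dimension $d$, by simplicity and transitivity under the Peirce-preserving automorphisms;
\item $V_{ij}$ carries a composition-algebra structure, so by Hurwitz's theorem $d\in\{1,2,4,8\}$.
\end{itemize}
The cases then split by rank:
\begin{itemize}
\item If $r=2$, the algebra is a spin factor $\R\oplus\R^{n}$, giving the Lorentz cone $\Lambda_n$.
\item If $r\ge3$, the coordinatisation theorem identifies $V$ with the Hermitian matrices $\mathrm{Herm}_r(\K)$. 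Associativity of $\K$ is needed as soon as $r\ge4$, which excludes $\oO$ except when $r=3$; that case is the Albert algebra, giving $\mathscr{P}_3(\oO)$.
\end{itemize}
Finally, the cone of squares in $\mathrm{Herm}_r(\K)$ is the positive definite cone $\mathscr{P}_r(\K)$, which completes the list.

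The hardest point is the coordinatisation for $r\ge3$ and the octonionic restriction to $r=3$. For these I would cite \cite[Ch.~V]{FK} (Jordan--von Neumann--Wigner) rather than reprove them.
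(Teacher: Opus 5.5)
Your strategy is the one the paper relies on. The paper gives no proof of this proposition; it quotes it as the classical classification, i.e.\ the symmetric-cone/Euclidean-Jordan-algebra correspondence combined with the Jordan--von Neumann--Wigner theorem, via \cite{FK}. However, two of the steps you sketch are wrong as written and would fail if carried out literally.

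\textbf{The Jordan identity in Step 2.} The facts $[\ft,\ft]\subset\mathfrak{k}$ and $\mathfrak{k}e=0$ only give $[L(x),L(y)]e=0$, i.e.\ $xy=yx$. They are the proof of (i). They say nothing about $[L(x),L(x^2)]$ on vectors other than $e$, so they cannot give (ii). The missing ingredients are $[\mathfrak{k},\ft]\subset\ft$ and the self-adjointness of every $L(z)$.
\begin{enumerate}
\item For $D\in\mathfrak{k}$ you get $[D,L(z)]=L(Dz)$, because both sides lie in $\ft$ and send $e$ to $Dz$. Hence every $[L(x),L(y)]\in\mathfrak{k}$ is a derivation.
\item Apply this derivation to $x\cdot x$ and read the result as an operator identity in $y$:
\[
3L(x)L(x^2)=2L(x)^3+L(x^3),\qquad x^3:=x\cdot x^2 .
\]
\item The right-hand side is self-adjoint, so $L(x)L(x^2)$ is self-adjoint. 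That is exactly $[L(x),L(x^2)]=0$.
\end{enumerate}

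\textbf{The Hurwitz lemma in Step 4.} Your claim that each $V_{ij}$ is a composition algebra, so $d\in\{1,2,4,8\}$, is false in rank $2$.
\begin{itemize}
\item The composition structure is built from the Peirce products $V_{12}\times V_{23}\to V_{13}$. It needs a third idempotent, so it exists only when $r\ge 3$.
\item In rank $2$, $d$ is unrestricted: the spin factor $\R\times\R^{n}$, whose cone is $\Lambda_n$, has $d=n-1$.
\item Taken at face value, your lemma would cut the rank-$2$ list down to $\Lambda_2,\Lambda_3,\Lambda_5,\Lambda_9$, i.e.\ the cones $\mathscr{P}_2(\K)$. That contradicts your own rank-$2$ conclusion.
\end{itemize}
To fix this, move the Hurwitz argument into the $r\ge 3$ branch, and treat $r=2$ separately by showing that a simple rank-$2$ Euclidean Jordan algebra is a spin factor. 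You have also omitted rank $1$, where $V=\R$ and the cone is $\R_{>0}=\mathscr{P}_1(\R)$.

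With these corrections, or by simply citing \cite[Thm.~III.3.1, Ch.~IV--V]{FK} for these steps, your outline becomes a complete argument along the same lines as the paper.
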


 \subsubsection{Symmetric cones}\label{S:3.1}
The cones of positive definite quadratic forms are non-compact symmetric spaces. A {\it symmetric space} is a Riemannian space, which can be written as the quotient of Lie groups $G/K$, where $G$ is a connected Lie group with an involutive automorphism whose fixed point set is essentially the compact subgroup $K\subset G.$ 

The pair $(G,K)$ is a {\it symmetric pair} provided that there exists an involution $s\in G$, such that  $(K_{s})_{0} \subset K \subset K_{s}$, where $K_{s}$ is the set of fixed points of $s$ and $(K_{s})_{0}$ is the identity component of $K_{s}$. See  \cite[Chp.IV, paragraphs 3, 4 ]{Hel} for a detailed exposition.
\subsubsection{Classification Table}
One identifies the space of  $n\times n$ symmetric (resp. hermitian) positive definite matrices over a real division algebra with the following non-compact symmetric spaces $GL_n(\R)/O_n$ (resp. $GL_n(\C)/U_n$, $GL_n(\hH)/Sp_n$ $GL_3 (\oO)/F_4$) (see \cite[p.97]{FK}). This is summarised in the following classification table. 
\begin{table}[ht]
    \centering
\begin{tabular}{|c|c|c|c|c|c|}
     \hline
Cone  & $G/K$ & $T_q\mathscr{P}_n$  & $\frak{g}$ & $\frak{k}$  \\
 \hline
 $\mathscr{P}_n(\R)$ & $GL_n(\R)/O_n $& $Sym(n,\R)$ & $ \frak{sl}(n,\R)\oplus \R$ & $\frak{o}$(n) \\
 $\mathscr{P}_n(\C)$ &  $GL_n(\C)/U_n$ &$Herm(n,\C)$  &  $ \frak{sl}(n,\C)\oplus \R$ & $\frak{su}$(n)  \\
 $\mathscr{P}_n(\hH)$ & $GL_n(\hH)/Sp_n$ & $Herm(n,\hH)$ &  $\frak{sl}(m,\hH)\oplus \R$ & $\frak{su}(n,\hH$) \\
 $\mathscr{P}_3(\oO)$ & $GL_3 (\oO)/F_4$  &$ Herm(3,\oO)$ & $ \frak{e}_{(-26)}\oplus \R$ & $\frak{f}_4$ \\
  \hline
\end{tabular}
\caption{}\label{T:table1}
\end{table}

To clarify the notations:
\begin{itemize}
 \item $T_q\mathscr{P}_n$ is the tangent space to the cone at a point $q$ in $\mathscr{P}_n$.
\item  $Sym(n,\mathbb{K})$ stands for the space of $n\times n$ symmetric matrices defined over $\mathbb{K}$;
\item  $Herm(n,\mathbb{K})$ denotes the space of $n\times n$ self-adjoint matrices  defined over $\mathbb{K}$; 
 \item $\frak{g}$ is the Lie algebra associated to $G$;
 \item $\frak{k}$  is the Lie algebra associated to $K$;
\end{itemize}

The tangent space to $\mathscr{P}$ at a point carries a Jordan algebra structure. We recall this in the table \ref{tab:Jordan}: 
\begin{table}[ht]
    \centering
    \begin{tabular}{|c|c|c|c|}
     \hline
    &&& \\
 Irreducible  & Formally real simple  & $dim \cJ $ & rank $\cJ $  \\
 symmetric cone & Jordan algebras $\cJ$ & & \\
& && \\
 \hline
& && \\
$ \mathscr{P}_n(\R)$  &  Jordan algebra of $n\times n$ self-adjoint real matrices & $\frac{1}{2}n(n+1)$ & $ n$  \\

& && \\
    $ \mathscr{P}_n(\C)$      &  Jordan algebra of $n\times n$  self-adjoint complex matrices & $n^2$ & $ n$    \\
   & && \\
    
    $ \mathscr{P}_n(\hH)$     &   Jordan algebra of $n\times n$  self-adjoint quaternionic matrices & $n(2n-1)$ & $n$   \\
  & && \\
    
     $ \mathscr{P}_3(\oO)$    &  Jordan algebra of $3\times 3$ self-adjoint octonionic matrices: &   27 & 3   \\
       &  Albert algebra. &  &\\
      & && \\
      \hline
    \end{tabular}
    \caption{Simple formally real Jordan algebras}
    \label{tab:Jordan}
\end{table}

\subsubsection{Symmetric bilinear forms}
It is a well known fact that if $G$ is semi-simple, the Killing bilinear form is non-degenerate on $\fg$. The symmetric bilinear form is given by:  
$$\langle X,Y\rangle=-Tr(ad X\, ad Y),$$ where $ad\, Z(\xi)=[Z,\xi]$ and $Z \in \fg$. Therefore, we may state the following:

\begin{prop}
Let  $\mathscr{P}$ be a cone listed in Table \ref{T:table1}. Then, this irreducible cone comes equipped with a $G$-invariant metric and with a symmetric bilinear
form given by 
\begin{equation}\label{E:bili}
    \langle X,Y\rangle=\Re\, Tr(XY),
\end{equation} where $X,Y\in T_p(Gl_n(\K)/K)\cong \ft\subset \fg$,  where $Tr(\cdot)$ stands for the trace operator and $\Re$ is the real part.
\end{prop}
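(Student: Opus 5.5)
The plan is to construct the form directly on the tangent space rather than deriving it from the Killing form. Then I would check the two required properties: non‑degeneracy, and $G$‑invariance once the form is transported to every point. For $\K=\R,\C,\hH$ I identify $T_e\mathscr{P}\cong\ft$ with $Herm(n,\K)$ (resp.\ $Sym(n,\R)$), using the Cartan decomposition $\fg=\textgoth{k}\oplus\ft$ and \cite[Theorem 3.3]{Hel} recalled above. Here $e$ is the identity matrix, and the stabiliser $K$ is $O_n$, $U_n$ or $Sp_n$, as in Table~\ref{T:table1}. On $\ft$ I set $\langle X,Y\rangle=\Re\,Tr(XY)$.

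\textbf{Step 1 (symmetry and non‑degeneracy).} For $\K$ associative, $\Re\,Tr(XY)=\Re\,Tr(YX)$, so the form is symmetric. For $X$ self‑adjoint, $\Re\,Tr(X^2)=\sum_{i,j}|x_{ij}|^2>0$ when $X\neq 0$, so the form is positive definite. In particular it is non‑degenerate and defines an inner product on $T_e\mathscr{P}$.

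\textbf{Step 2 ($G$‑invariance).} The group $GL_n(\K)$ acts on $\mathscr{P}$ by $g\cdot x=gxg^*$. I would define the metric at an arbitrary point by
\[
\langle X,Y\rangle_x=\Re\,Tr\bigl(x^{-1}Xx^{-1}Y\bigr).
\]
The differential of the action is $X\mapsto gXg^*$, and invariance follows from
\[
(gxg^*)^{-1}gXg^*(gxg^*)^{-1}gYg^*=(g^*)^{-1}x^{-1}Xx^{-1}Yg^*
\]
together with cyclicity of the real trace. At $x=e$ this metric reduces to (\ref{E:bili}). Moreover, $K$ acts on $\ft$ by $X\mapsto kXk^*$ with $k^*=k^{-1}$, and the form is $\mathrm{Ad}(K)$‑invariant. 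Hence this metric is the $G$‑invariant Riemannian metric of the symmetric space $G/K$ induced by $\langle\cdot,\cdot\rangle$ on $\ft$.

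\textbf{Step 3 (comparison with the Killing form).} For the relation with the Killing form, I would split $\fg=\frak{sl}\oplus\R$. On the semisimple part, the restriction of the Killing form to $\ft\cap\frak{sl}$ is a positive multiple of $\Re\,Tr(XY)$, since both forms are $\mathrm{Ad}(K)$‑invariant and $K$ acts irreducibly on it. On the centre $\R\cdot\mathrm{Id}$, the Killing form vanishes, so there I extend by $\Re\,Tr$ directly. One sign correction is needed: on $\ft$ the Killing form $Tr(\mathrm{ad}X\,\mathrm{ad}Y)$ is already positive, so it is that form, not its negative, which is proportional to the trace form.

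\textbf{Main obstacle: the octonionic cone $\mathscr{P}_3(\oO)$.} Here there is no matrix group, and cyclicity of the trace is not automatic. I would replace $XY$ by the Jordan product $X\circ Y=\tfrac12(XY+YX)$ and use the Albert‑algebra facts $\Re\,Tr(X\circ Y)=\Re\,Tr(XY)$ and associativity of the trace form, $Tr((X\circ Y)\circ Z)=Tr(X\circ(Y\circ Z))$; see \cite{FK}. Invariance under the structure group with Lie algebra $\frak{e}_{(-26)}\oplus\R$ then follows from \cite[Prop.~III.4.3]{FK}: the trace form is invariant under $\mathrm{Aut}(\cJ)=F_4$, and the quadratic representation $P(x)$ replaces $x^{-1}Xx^{-1}$ in the formula of Step 2. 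Checking that $\langle X,Y\rangle_x=\Re\,Tr\bigl(P(x)^{-1}X\circ Y\bigr)$ is well defined and invariant in this non‑associative setting is the step I expect to need the most care. I would reduce it to the fundamental formula $P(P(x)y)=P(x)P(y)P(x)$.
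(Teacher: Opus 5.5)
Your proposal is correct, but it does not follow the paper's route. The paper's proof is a single sentence: the form ``follows from the existence of the Killing form'', with a pointer to \cite[p. 46, Proposition III.1.5]{FK}, which concerns positivity of the Jordan trace form. You instead build $\Re\,Tr(XY)$ directly on $\ft$. You check positivity through $\sum|x_{ij}|^2$ and exhibit the invariant metric $\Re\,Tr(x^{-1}Xx^{-1}Y)$ explicitly, using only cyclicity of the real trace.

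Your route actually closes the argument, which the Killing-form route does not as literally stated, for two reasons:
\begin{enumerate}
\item $\fg$ is reductive, not semisimple (the paper's own table lists it as $\frak{sl}\oplus\R$). So the paper's hypothesis ``if $G$ is semi-simple'' fails, and the Killing form vanishes on the central direction $\R\,\mathrm{Id}\subset\ft$.
\item The normalisation $-Tr(\mathrm{ad}X\,\mathrm{ad}Y)$ written just before the statement is negative, not positive, on $\ft\cap\frak{sl}$.
\end{enumerate}
Your Step~3 records exactly these corrections. The Killing form agrees with $\Re\,Tr$ up to a positive factor only on $\ft\cap\frak{sl}$, and $\Re\,Tr$ has to be added by hand on the centre. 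The paper also never checks $G$-invariance; you do. What the paper's one-liner offers is brevity and, through the Jordan-algebra reference, a treatment that is uniform in $\K$.

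You can get that uniformity cheaply, and the octonionic case you flag as the main obstacle needs less than you expect. The stabiliser of $e$ is $K=F_4=\mathrm{Aut}(\cJ)$, which preserves the Jordan trace and hence $\mathrm{tr}(X\circ Y)$. So the homogeneous-space extension you already invoke at the end of Step~2 settles $G$-invariance with no pointwise computation: an $\mathrm{Ad}(K)$-invariant inner product on $\ft$ induces a $G$-invariant metric on $G/K$.

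The quadratic representation $P(x)$ and the fundamental formula are needed only if you want the explicit formula at a point $x$. If you write it, parenthesise it as $\mathrm{tr}\bigl((P(x)^{-1}X)\circ Y\bigr)$, so that $P(x)^{-1}$ acts on $X$ alone and not on $X\circ Y$.
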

\begin{proof}
This statement follows from the existence of the Killing form. See \cite[p. 46, Proposition III.1.5]{FK} for a precise statement. 
\end{proof}

\section{Landau--Ginzburg model}\label{S:LG}
We compare this construction with the LG model. Mathematically the LG model is summarised as a non-compact K\"ahler manifold and a holomorphic Morse function.  To improve the mathematical understanding, we propose to recall additional information which are present in the original construction. 

\subsubsection{}
 
In 1957, Bardeen, Cooper, and Schrieffer (BCS) introduced the foundation for a quantum theory of superconductivity. This gave the BCS theory.
\begin{itemize}
    \item There exists an important object:  the (BCS) {\it wave function} $\psi$. It is a function of two-particle system $k \uparrow$ and -$k \downarrow$ ($\uparrow$ and $\downarrow$ stand for the spin). 
    \item In the BCS state, one-particle orbitals are occupied in pairs: if an orbital with wave vector $k$ and spin up is occupied, the orbital with wave vector $k$ and spin down is also occupied. If $k \uparrow$ is vacant, then so is $ k \downarrow$ vacant.
    \end{itemize}
The pairs are called {\it Cooper pairs} and they have spin zero. 

For a complete set of states of a two-electron system satisfying periodic boundary conditions in a cube of unit volume, take the plane wave product functions $\varphi(k_1,k_2;r_1,r_2)=\exp(\imath(k_1\cdot r_1+k_2\cdot r_2))$. It is assumed that the electrons are of opposite spin.
So, the plane wave can be expressed as $\varphi(K,k;R,r)=\exp(\imath K\cdot R)\exp(\imath k\cdot r),$ where we have:

 the barycentre $R$ of $r_1$ and $r_2$, $R=\frac{1}{2}(r_1+r_2)$ and we write $r=r_1-r_2$; Reciprocally,
$K=k_1+k_2$ and $k=\frac{1}{2}(k_1-k_2)$.

\subsubsection{}
Let us introduce the order parameter $\psi(r)$ such that  $\overline{\psi(r)}\psi(r)=|\psi(r)|^2=n(r)$. This is a complex valued function. It corresponds to the local concentration of superconducting electrons. It represents a condensed wave function which is a single quantum state occupied by a large fraction of Cooper pairs. In other words, this generates the {\it density of probability} of finding Cooper pairs in a given domain.

\subsubsection{}
 We explain how the holomorphic Morse function mentioned above is obtained. It corresponds to the free energy density, expressed as a function of $\psi(r)$ as follows: 

\begin{equation}
F_s(r)=F_0-\alpha |\psi|
^2 +\frac{\beta}{2}|\psi|^4 + \frac{1}{2m}|-\imath \hbar  \nabla- q\frac{A}{c}\psi |^2 - \int_0^{B_a} M \cdot dB_a
\end{equation}
where:
\begin{itemize}
\item $\alpha, \beta,m$ are positive constants;
\item $F_0$ is the free energy density of the normal state;
\item $- \alpha |\psi |
^2 +\frac{\beta}{2}|\psi|^4$ is a Landau form for the expansion of free energy vanishing at a second-order phase transition;
\item the term in $| \nabla |\psi |^2$ represents an increase in energy caused by a spatial variation of the order parameter. 
\item the term  $ \int_0^{B_a} M \cdot dB_a$ represents the increase in the superconducting free energy.
\end{itemize}
\subsubsection{Landau--Ginzburg equation}
In order to obtain the Landau--Ginzburg equation it is necessary to minimise the total free energy $\int F_s(r)dV$ w.r.t $\psi(r)$.
We get 
\[\delta F_s(r)=[-\alpha \psi + \beta |\psi |^2 \psi +\frac{1}{2m}(-\imath \hbar \nabla-q\frac{A}{c})\delta\overline{\psi}+c.c.]\]
If $\delta \overline{\psi}$ vanishes on the boundaries, it follows that 
$\int \delta F_s(r)dV=\int dV \delta\overline{\psi} [-\alpha \psi + \beta |\psi |^2\psi +\frac{1}{2m}(-\imath \hbar \nabla-q\frac{A}{c})]+c.c$.
The integral vanishes if the term in the bracket is null. That is if
\begin{equation}
    [\frac{1}{2m}(-\imath \hbar \nabla-q\frac{A}{c})^2 -\alpha \psi + \beta | \psi |^2]\psi=0
\end{equation} 
This last equation is precisely the LG equation. 

By minimising the free energy $F_s$ wrt $\delta A$ one gets a gauge-invariant expression for the super current flux:
\[j_s(r)=\frac{\imath g\hbar}{2m}[\overline{\psi},\nabla\psi] -\frac{q^2}{mc}|\psi|^2A.\]

\vfill\eject

\end{document}